\DeclareMathOperator*\uplim{\overline{lim}}
\newtheorem{theorem}{Theorem}[section]
\newtheorem{lemma}[theorem]{Lemma}
\newtheorem{prop}[theorem]{Proposition}
\newtheorem{assumption}[theorem]{Assumption}
\theoremstyle{definition}
\newtheorem{definition}[theorem]{Definition}
\theoremstyle{remark}
\newtheorem{remark}[theorem]{Remark}
\numberwithin{equation}{section}
\DeclareMathAlphabet{\mathsl}{OT1}{cmss}{m}{sl}
\SetMathAlphabet{\mathsl}{bold}{OT1}{cmss}{bx}{sl}
\newcommand\restrict[1]{\raisebox{-.5ex}{$|$}_{#1}}
\newcommand{\Norm}[2]{%
  \ensuremath{%
    \mathchoice{\big\lVert #1 \big\rVert}
     {\lVert #1 \rVert}
     {\lVert #1 \rVert}
     {\lVert #1 \rVert}_{\raisebox{-.0ex}{$\scriptstyle #2$}}
  }
}
\DeclareMathOperator{\mean}{\mathbb{E}}
\DeclareMathOperator{\Mean}{\mathrm{E}}
\DeclareMathOperator{\prob}{\mathbb{P}} 
\DeclareMathOperator{\Prob}{\mathrm{P}} 
\DeclareMathOperator{\supp}{\mathrm{supp}}
\newcommand{\ldef}{\ensuremath{\mathrel{\mathop:}=}}
\newcommand{\rdef}{\ensuremath{=\mathrel{\mathop:}}}
\DeclarePairedDelimiter\floor{\lfloor}{\rfloor}
\DeclarePairedDelimiterX{\inp}[2]{\langle}{\rangle}{#1, #2} 
\DeclarePairedDelimiterX{\inpe}[1]{\langle\,}{\,\rangle}{#1} 
\begin{document}

\title[Invariance principle for cyclic random walks]{Quenched invariance principle for random walks in random environments admitting a cycle decomposition}


\author[J.-D.\ Deuschel]{Jean-Dominique Deuschel}
\address{Technische Universit\"at Berlin}
\curraddr{Strasse des 17. Juni 136, 10623 Berlin, Germany}
\email{deuschel@math.tu-berlin.de}
\thanks{}

\author[M.\ Slowik]{Martin Slowik \orcidlink{0000-0001-5373-5754}}
\address{University of Mannheim}
\curraddr{Mathematical Institute, B6, 26, 68159 Mannheim}
\email{slowik@math.uni-mannheim.de}
\thanks{}

\author[W.\ Weng]{Weile Weng \orcidlink{0009-0009-6565-878X}}
\address{Technische Universit\"at Berlin}
\curraddr{Strasse des 17. Juni 136, 10623 Berlin, Germany}
\email{weile.weng@gmail.com}
\thanks{Supported by DFG-funded Berlin-Oxford IRTG 2544, and Berlin Mathematical School.}

\subjclass[2000]{60K37, 60F17, 82C41, 82B43}

\keywords{Random walks in random environments, cycle representation, invariance principle
    }

\date{\today}

\dedicatory{}

\begin{abstract}
  We study a class of non-reversible, continuous-time random walks in random environments on $\mathbb{Z}^d$ that admit a cycle representation with finite cycle length.  The law of the transition rates, taking values in $[0, \infty)$, is assumed to be stationary and ergodic with respect to space shifts. Moreover, the transition rate from $x$ to $y$, denoted by $c^\omega(x,y)$, is a superposition of non-negative random weights on oriented cycles that contain the edge $(x,y)$. We prove a quenched invariance principle under moment conditions that are comparable to the well-known p-q moment condition of Andres, Deuschel, and Slowik \cite{ADS15} for the random conductance model. A key ingredient in proving the sublinearity is an energy estimate for the non-symmetric generator. Our result extends that of Deuschel and Kösters \cite{DK08} beyond strong ellipticity and bounded cycle lengths.
\end{abstract}

\maketitle

\tableofcontents

\section{Introduction}
\label{sec:intro}


We consider time continuous random walks on $\mathbb{Z}^d$ in  random environments with a finite but unbounded cycle decomposition.
Random walks with finite cycle decomposition, also called centred walks
cf. \cite{Mat06},  are the discrete versions of  diffusions on $\mathbb{R}^d$ in divergence form generated by
$$Lf(x)=\text{div}(c(x)\nabla f(x)).$$
where $c(x)=s(x)+a(x) $, $s(x)=s^*(x)$ is the symmetric, $a^*(x)=-a(x)$
the anti-symmetric part of the matrix $c(x)\in \mathbb{R}^{d\times d}.$
Assuming uniform ellipticity and boundedness:
$$\lambda<s(x)<\Lambda, \qquad |a(x)|<B, \qquad \lambda, \Lambda, B\in (0, \infty)$$
the celebrated results of Moser-De Giorgi-Nash-Aronson show 
parabolic and elliptic  Harnack inequalities and Gaussian type estimates
for the corresponding heat kernel, cf. \cite{Aro67, Nas58, Mos64, DeG57}.
Moreover, Papanicolaou and Varadhan \cite{PV82} proved  a quenched invariant principle (QIP),  i.e. the almost sure convergence for the diffusively rescaled
diffusion in ergodic environment.  See also Fannjiang and Komorowski
\cite{FK97, FK99} for the unbounded case.

Random walks with finite cycle decomposition generated by
$$Lf(x)=\sum_{y} c(x,y)(f(y)-f(x)), $$
have jump rates $c(x,y)\ge 0$  given in terms of oriented cycles
$\gamma=(x_0,x_1,..., x_n=x_0)$ of length $\ell(\gamma)=n$ and weights
$w(\gamma)\ge 0$,
$$c(x,y)=\sum_{\gamma} w(\gamma)1_{(x,y)\in\gamma}.$$
The corresponding (non-symmetric) Dirichlet form is determined by
$${\mathcal{E}}(f,g)=\sum_\gamma
w(\gamma){\mathcal{E}}_\gamma(f,g), \qquad\text{where}\quad 
{\mathcal{E}}_\gamma(f,g)=\sum_{x_i\in\gamma}f(x_i)(g(x_i)-g(x_{i+1})).$$
In particular the counting measure is invariant, and the adjoint generator
${\mathcal{E}}^*(f,g)={\mathcal{E}}(g,f)$
is given in terms of the reversed cycles
$\gamma^*=(x_n,x_{n-1},...,x_0=x_n)$ with weight
$w^*(\gamma^*)=w(\gamma)$.
In the special case that all the  cycles have  length $\ell(\gamma)=2$,
then $\gamma=\gamma^*$, thus  the random walk is reversible  with
symmetric jump rates, also called conductances.

Deuschel and Kumagai \cite{DK13} showed that diffusions in divergence
form can be approximate by random walks  on $\frac{1}{n}\mathbb{Z}^d$ with bounded cycle decomposition and  bounded weights.
In an earlier work by Deuchel and K\"{o}sters \cite{DK08}, the quenched invariant principle (QIP) is derived for 
uniformly elliptic  random walks in ergodic random environments with
bounded cycle decomposition and bounded weights.
In the symmetric situation, i.e. dealing with cycles of length 2, the QIP
has been proved for non-elliptic random conductance models under suitable
moment conditions, cf. \cite{ADS15, BS20}.

The  objective of the current paper is to obtain the QIP  in the setting
of unbounded cycle length,  and non-uniform elliptic, i.e. unbounded and
degenerate weights under appropriate moment conditions.
In particular our result extends the QIP for the  reversible  random
conductance model in degenerated ergodic environment to the 
non-symmetric  case under similar moment conditions.
When the cycle lengths are unbounded, the random walk
does not satisfy the standard sector condition
$$({\mathcal{E}}(f,g))^2\leq\text{const}\, {\mathcal{E}}(f,f) \cdot {\mathcal{E}}(g,g), $$ which is usually assumed for the derivation of the invariance principle
in non-symmetric situations, cf. the works of Varadhan and collaborators \cite{Var95, SVY00}.

Our proof of QIP follows the general strategy of Andres, Deuschel and Slowik \cite{ADS15}, which deals with the symmetric case in the setting of a non-uniform elliptic conductance model. Specifically, we first introduce harmonic coordinates and then construct the corrector. Next, using PDE techniques adapted to the discrete setting we show the almost sure sublinearity of the corrector. The main challenge here is to deal and control the non-symmetric part of the generator. In particular in view the unboundedness of the cycle length
and loss of the sector condition, the construction of the corrector by means of the Lax-Milgram theorem necessitates  a moment condition on the  weighted length of cycles, which implies a bound of of the  ${\mathcal{H}}_{-1}$ norm of the corresponding
drift -- a natural condition in the setting of double stochastic generators, cf. \cite{KT17}.  The next step is to control the corrector. Instead of the Moser iteration used in \cite{ADS15}, we adapt the
analytical technique of the De Giorgi iteration, which is robust for parabolic extension.  While the Sobolev and weighted Poincare inequalities relying  on the symmetric part are identical, the corresponding  energy estimate is in view of the
non-symmetry more challenging. Here again the moment condition on the
weighted length is implemented.

\section{Model and Results}
\label{sec:model-results}
\subsection{The model}
\label{subsec:model}
In this section, we begin with introducing some terminology related to the structure of the underlying graph and the cyclic random environments that we are interested in. We use more elaborate notations for clarity. 

Consider the $d$-dimensional Euclidean lattice, $\mathbb{Z}^{d}$, for $d \geq 1$ that we view as a simple, non-oriented lattice graph $(\mathbb{Z}^{d}, E_{d})$ with vertex set $\mathbb{Z}^{d}$ and non-oriented, nearest-neighbor edge set $E_{d} \ldef \bigl\{ \{x, y\} : x, y \in \mathbb{Z}^{d} \text{ s.th. } |x-y| = 1 \bigr\}$, where $|\cdot|$ stands for the usual Euclidean norm on $\mathbb{Z}^{d}$.  An oriented (directed) edge $(x, y)$ is defined by the choice of an ordering in the edge $\{x, y\} \in E_{d}$, and the corresponding set of all oriented edges is denoted by $\vec{E}_{d}$.  Note that we always view $\vec{E}_{d}$ as a symmetric\footnote{Here, symmetric means that $(x, y) \in \vec{E}_{d}$ iff $(y, x) \in \vec{E}_{d}$.} subset of $\mathbb{Z}^{d} \times \mathbb{Z}^{d}$.  Further, we define an involution on $\vec{E}_{d}$ by the mapping $ (x, y) \mapsto -(x, y) \ldef (y, x)$.

An oriented (directed) path $\vec{\gamma}$ on $\mathbb{Z}^{d}$ from a vertex $v_{0}$ to a vertex $v_{n}$ is defined, in graph-theoretic sense, as an alternating sequence $\vec{\gamma} = (v_{0}, \vec{e}_{1}, \ldots, v_{n-1}, \vec{e}_{n}, v_{n})$ of mutually distinct vertices $v_{j} \in \mathbb{Z}^{d}$ (except possibly the initial and final ones) and edges $\vec{e}_{j} = (v_{j-1}, v_{j}) \in \vec{E}_{d}$.  Such path has length $|\vec{\gamma}| = n$.  By writing $x \in \vec{\gamma}$, we mean that the vertex $x$ belongs to the oriented path $\vec{\gamma}$, that is, there exists $j \in \{0, \ldots, n\}$ such that $x = v_{j}$. Likewise, we write $(x, y) \in \vec{\gamma}$ if the oriented edge $(x, y)$ belongs to the oriented path $\vec{\gamma}$.  Further, for any $z \in \mathbb{Z}^{d}$, the shifted path $z + \vec{\gamma}$ from $z + v_{0}$ to $z + v_{n}$ is defined by $(z + v_{0}, z + \vec{e}_{1}, \ldots, z + v_{n-1}, z + \vec{e}_{n}, z + v_{n})$, where $z + (x, y) \ldef (z + x, z + y)$ for any $(x, y) \in \vec{E}_{d}$.
Finally, an oriented (directed) cycle on $\mathbb{Z}^{d}$ based at $x_{0}$ is, by definition, a finite, oriented path $\vec{\gamma}$ that begins and end at the same vertex $x_{0}$.

In what follows, let $\Gamma_{\!0} \ldef \{\vec{\gamma}_{i} : i \in \mathbb{N}\}$ be a collection of directed cycles on $\mathbb{Z}^{d}$ based at $0$ of finite length such that
\begin{align}
  \label{eqn:cycle_cover_nns} 
  \bigl\{
    x : \exists\, \vec{\gamma} \in \Gamma_{\!0}
    \text{ s.th. either }
    (0, x) \in \vec{\gamma} \text{ or } (x, 0) \in \vec{\gamma}
  \bigr\}
  \;=\;
  \bigl\{ x : |x| = 1 \bigr\}
  \;\rdef\;
  \mathcal{N}, 
\end{align}
and $(\Omega, \mathcal{F}) \ldef \bigl([0, \infty)^{\Gamma_{\!0} \times \mathbb{Z}^{d}}, \mathcal{B}([0, \infty)^{\otimes (\Gamma_{\!0} \times \mathbb{Z}^{d})})\bigr)$ be a measurable space equipped with the Borel-$\sigma$-algebra. For any $\omega \in \Omega$, $x \in \mathbb{Z}^{d}$ and $\vec{\gamma} \in \Gamma_{\!0}$, $\omega_{\vec{\gamma}}(x)$ is called the \emph{cycle weight} of the cycle $x + \vec{\gamma}$ that is based at $x$.  
Henceforth, we consider a probability measure, $\prob$, on $(\Omega, \mathcal{F})$, and write $\mean$ for the expectation with respect to $\prob$.  We endow the probability space $(\Omega, \mathcal{F}, \prob)$ with the $d$-parameter group of space shift $(\tau_{x})_{x \in \mathbb{Z}^{d}}$ that acts on $\Omega$ as 
\begin{align}
  \label{eq:def:shift}
  (\tau_{x} \omega_{\vec{\gamma}})(y)
  \;\ldef\;
  \omega_{\vec{\gamma}}(x + y)
  \qquad \forall\, x, y \in \mathbb{Z}^{d},\, \vec{\gamma} \in \Gamma_{\!0}.
\end{align}
\begin{assumption}\label{ass:P}
  Throughout we assume that $\prob$ satisfies following conditions:
  \begin{enumerate}[(i)]
  \item
    $\prob$ is stationary and ergodic with respect to space shifts $(\tau_{x})_{x \in \mathbb{Z}^{d}}$, that is, $\prob \circ \tau_{x}^{-1} = \prob$ for all $\mathbb{Z}^{d}$ and $\prob[A] \in \{0, 1\}$ for all $A \in \mathcal{F}$ such that $\tau_{x}^{-1}(A) = A$ for all $x \in \mathbb{Z}^{d}$.
    
  \item
    For every $z \in \mathcal{N}$,
    \begin{align}
      \label{ass:symmetric-ellipticity}
      \prob\biggl[
        \sum\nolimits_{\vec{\gamma} \in \Gamma_{0}} \sum\nolimits_{x \in \mathbb{Z}^{d}}
        \omega_{\vec{\gamma}}(x)\,
        \bigl(
          \mathbbm{1}_{(0, z) \in x + \vec{\gamma}} + \mathbbm{1}_{(z, 0) \in x + \vec{\gamma}}
        \bigr)
        > 0
      \biggr]
      \;=\;
      1.
    \end{align}
    
  \item
    $\mean\Bigl[\sum_{\vec{\gamma} \in \Gamma_{\!0}} \omega_{\vec{\gamma}}(0)\, |\vec{\gamma}| \Bigr] < \infty$.
  \end{enumerate}
\end{assumption}
Given any $\omega \in \Omega$, we define \emph{edge weights}
, $c^{\omega}$, by
\begin{align}\label{eq:def:random_edge_weights}
  c^{\omega}(x, y)
  \;\ldef\;
  \sum_{\vec{\gamma} \in \Gamma_{\!0}} \sum_{z \in \mathbb{Z}^{d}}
  \omega_{\vec{\gamma}}(z) \mathbbm{1}_{(x, y) \in z + \vec{\gamma}}
  \;\in\;
  [0, \infty],
  \qquad \forall\, x, y \in \mathbb{Z}^{d}.
\end{align}
Obviously, $c^{\omega}(x, y) = 0$ for all $(x, y) \in (\mathbb{Z}^{d} \times \mathbb{Z}^{d}) \setminus \vec{E}_{d}$ and, by Assumption~\ref{ass:P}-(iii), $c^{\omega}(x, y) < \infty$ for every $(x, y) \in \vec{E}_{d}$ and $\prob$-a.e.\ $\omega$.  As an immediate consequence of \eqref{eq:def:shift} we have that, for any $\omega \in \Omega$ and $x, y, z \in \mathbb{Z}^{d}$,
\begin{align}
  \label{prop:shifted-rate}  
  c^{\tau_{z} \omega}(x, y) \;=\; c^{\omega}(x + z, y + z).
\end{align}
It should be noted that the edge weights are symmetric, that is, $c^{\omega}(x, y) = c^{\omega}(y, x)$ iff the cycle weights are supported on cycles $\vec{\gamma} \in \Gamma_{\!0}$ with length $|\vec{\gamma}| \leq 2$.  Clearly, by construction, the edge weights, $c^{\omega}$, admits a so-called \emph{finite cycle decomposition}.  However, it is important to note that two different sets of cycles and cycle weights may give rise to the same edge weights. By setting
\begin{align*}
  c_{\mathrm{s}}^{\omega}(x, y)
  \;\ldef\;
  \frac{1}{2} \bigl( c^{\omega}(x, y) + c^{\omega}(y, x) \bigr)
  \qquad \text{and} \qquad
  c_{\mathrm{a}}^{\omega}(x, y)
  \;\ldef\;
  \frac{1}{2} \bigl( c^\omega(x, y) - c^{\omega}(y, x) \bigr)
\end{align*}
for any $x, y \in \mathbb{Z}^{d}$, we can decompose the edge weights into the symmetric and anti-symmetric part, respectively.  Notice that, in view of \eqref{eq:def:random_edge_weights}, it holds that
\begin{align*}
  \sum_{y \in \mathbb{Z}^{d}} c_{\mathrm{a}}^{\omega}(x, y) \;=\; 0
  \qquad \text{and} \qquad
  |c_{\mathrm{a}}^{\omega}(x, y)| \;\leq\; c_{\mathrm{s}}^{\omega}(x, y),
  \qquad \forall\, x, y \in \mathbb{Z}^d.
\end{align*}
Notice that the property on the left is equivalent to
\begin{align}
  \label{eqn:prop:doubly_stochastic}
  \sum_{y \in \mathbb{Z}^{d}} c^{\omega}(x, y)
  \;=\;
  \sum_{y \in \mathbb{Z}^{d}} c^{\omega}(y, x),
  \qquad \forall\, x \in \mathbb{Z}^d.
\end{align}
Further, we define following measures on $\mathbb{Z}^{d}$
\begin{align}
  \label{eq:def:random_speed_measure}
  \mu^{\omega}(x)
  \;\ldef\;
  \sum_{z \in \mathcal{N}} c_{\mathrm{s}}^{\omega}(x, x + z)
  \qquad \text{and} \qquad
  \nu^{\omega}(x)
  \;\ldef\;
  \sum_{z \in \mathcal{N}} \frac{1}{c_{\mathrm{s}}^{\omega}(x, x + z)}.
\end{align}
\begin{remark}
  Given Assumption~\ref{ass:P}-(i), and the definition \eqref{eq:def:random_edge_weights}, Assumption~\ref{ass:P}-(ii) is equivalent to
  \begin{align}
    \label{ass:symmetric-ellipticity-2}
    c_{\mathrm{s}}^{\omega}(x, y) \;>\; 0,
    \qquad \forall\, (x, y) \in \vec{E}_{d} \text{ and } \prob\text{-a.e. } \omega.
  \end{align}
  Together with definition \eqref{eq:def:random_speed_measure}, Assumption~\ref{ass:P}-(ii) and~(iii) are equivalent to
  \begin{align}
    \label{eq:cond:1st-moment}
    \prob\bigl[ \nu^{\omega}(0) < \infty \bigr] \;=\; 1
    \qquad \text{and} \qquad
    \mean\bigl[ \mu^{\omega}(0) \bigr] \;<\; \infty.
  \end{align}
\end{remark}

For any given $\omega \in \Omega$, we are interested in a continuous-time
Markov chain, $X \equiv (X_{t} : t \geq 0)$, on $\mathbb{Z}^{d}$ that is defined by the following rule: the stochastic process $X$ waits at its current position $x$ an exponential time with mean $1/\mu^{\omega}(x)$ and then jumps to a vertex $y$ that is connected to $x$ by an edge $\{x, y\} \in E_{d}$ with probability $c^{\omega}(x, y) / \mu^{\omega}(x)$.  Since the law of the holding times of $X$ depends on the spatial position and the jump rates admits a cycle representation, we refer to $X$ as both a \emph{variable speed random walk (VSRW)} and a \emph{cyclic random walk}.  Its generator, $L^{\omega}$, acts on bounded functions $f\colon \mathbb{Z}^{d} \to \mathbb{R}$ as
\begin{align}\label{eq:def:generator_quenched}
  \bigl(L^{\omega} f\bigr)(x)
  \;\ldef\;
  \sum_{y \in \mathbb{Z}^{d}} c^{\omega}(x,y) \big(f(y) - f(x))
  \qquad \forall\, x \in \mathbb{Z}^d.
\end{align}
We denote by $\Prob^{\omega}$ the law of the process on the space of $\mathbb{Z}^{d}$-valued c\`{a}dl\`{a}g functions on $\mathbb{R}$, starting in $x$. The corresponding expectations will be denoted by $\Mean^{\omega}$.  Notice that $X$ is invariant with respect to the counting measure.  Indeed, from \eqref{eqn:prop:doubly_stochastic} it follows that
\begin{align*}
  \sum_{x \in \mathbb{Z}^{d}} \bigl(L^{\omega} f \bigr)(x)
  \;=\;
  \sum_{y \in \mathbb{Z}^{d}} f(y) \sum_{x \in \mathbb{Z}^{d}} c^{\omega}(x, y)
  -
  \sum_{x \in \mathbb{Z}^{d}} f(x) \sum_{y \in \mathbb{Z}^{d}} c^{\omega}(x, y)
  \;=\;
  0
\end{align*}
for any finitely supported $f\colon \mathbb{Z}^{d} \to \mathbb{R}$. Moreover, Assumption~\ref{ass:P} is sufficient to conclude that, for $\prob$-a.e.\ $\omega$ and any $x \in \mathbb{Z}^{d}$, the stochastic process, $X$, is conservative under $\Prob_{x}^{\omega}$, that is, there are only finitely many jumps in any finite time interval $\Prob_{x}^{\omega}$-a.s., see Proposition \ref{prop:time-ergodicity-and-non-explosion} later on. Finally, we write $p^{\omega}(t, x, y) \ldef \Prob_{x}^{\omega}\bigl[ X_{t} = y \bigr]$ for $x, y \in \mathbb{Z}^{d}$ and $t \geq 0$ to denote the transition density with respect to the counting measure.
%
%
%
%


%
%
%
%

\subsection{Results}
We are interested in the $\mathbb{P}$-almost sure or quenched long-time behaviour, in particular in obtaining a quenched functional central limit theorem for the process $X$ in the sense of the following definition.

\begin{definition}[QFCLT] Set $X_t^{(n)}:=\frac{1}{n}X_{n^2t}, t\geq 0$. We say the \textit{quenched functional CLT} (QFCLT) or \textit{quenched invariance principle} holds for $X$ if for $\mathbb{P}$-a.s. $\omega$ under $\Prob_0^\omega$, $X^{(n)}$ converges in law to a Brownian motion on $\mathbb{R}^d$ with covariance matrix $\Sigma^2=\Sigma \cdot \Sigma^T$. That is, for every $T>0$ and every bounded continuous function $F$ on the Skorohod space $D([0, T], \mathbb{R}^d)$, setting $\psi_n= \Mean_0^\omega[F(X^{(n)})]$ and $\psi_\infty= \Mean_0^{\text{BM}}[F(\Sigma \cdot W)]$ with $(W, \Prob_0^{\text{BM}})$ being a Brownian motion started at $0$, we have that $\psi_n \to \psi_\infty \mathbb{P}$-a.s.

As our main result we establish a QFCLT for $X$ under some additional moment conditions. 
\end{definition}

\begin{assumption}
  \label{ass:p-q-moment}
  There exist $p, q \in (1, \infty]$ be such that $1/p + 1/q < 2/d$ and assume that
  \begin{enumerate}[(i)]
  \item
    $\mathbb{E}\bigl[\mu^{(2), \omega}(0)^{p} \bigr] < \infty$, where
    \begin{align*}
      \mu^{(k), \omega}(x)
      \;\ldef\;
      \sum_{\vec{\gamma} \in \Gamma_{0}} \sum_{y \in \mathbb{Z}^{d}} \omega_{\vec{\gamma}}(y)\abs{\vec{\gamma}}^{k}\, \mathbbm{1}_{x\in \vec{\gamma}+y},
      \qquad k \in \mathbb{N}_{0}, x \in \mathbb{Z}^{d}.
    \end{align*} 
  \item
    $\mathbb{E}\bigl[ \bigl(1/(c_{\mathrm{s}}^{\omega}(0, z)\bigr)^q \bigr]<\infty$, for all $z \in \mathcal{N}$.
  \end{enumerate}
\end{assumption}
\begin{remark}
  Assumption~\ref{ass:p-q-moment}-(i) implies
  \begin{align}
    \label{eqn:p-moment}
    \mathbb{E}\bigl[c_{\mathrm{s}}^{\omega}(0,z)^{p}\bigr]
    \;<\;
    \infty,
    \qquad \forall\, z\in \mathcal{N}.
  \end{align}
  Assumption~\ref{ass:P}-(ii) follows from \eqref{eqn:p-moment}. Moreover,  \eqref{eqn:p-moment} together with Assumption ~\ref{ass:p-q-moment}-(ii) constitutes the p-q moment condition in \cite[Theorem 1.3]{ADS15} for the case of random conductance model. In this symmetric setting, the constraint $1/p + 1/q < 2/d$ is optimal in deriving the quenched local limit theorem (QLLT) for the constant speed random walks (CSRW) \cite[Theorem 5.4]{ADS16}. In view of the QFCLT, \cite{BS20} improved the constraint to $1/p + 1/q < 2/(d-1)$, for $d \geq 3$, with $p = q = 1$ already established by Biskup \cite{Bis11} as the optimal condition for $d = 2$. Moreover, the bound $2/(d-1)$ is optimal in ensuring the corrector is sublinear everywhere, as well as for obtaining the QLLT for VSRW. We adopt the condition $1/p + 1/q < 2/d$ as its application has been prevalent in many related works cf.~\cite{ADS16, ACDS18, BCKW21, Bis23}.
\end{remark}

\begin{theorem}[QFCLT]
  \label{thm:QFCLT} 
  Suppose that $d \geq 2$ and Assumption~\ref{ass:P}-(i) and \ref{ass:p-q-moment} hold. 
  Then, the QFCLT holds for $X$ with a deterministic non-degenerate covariance matrix $\Sigma^{2}$.
\end{theorem}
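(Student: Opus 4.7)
The plan is to adapt the classical corrector method, in its non-reversible form, to the cyclic setting, closely following the strategy of \cite{ADS15}. First I would pass to the environmental process $\bar\omega_t := \tau_{X_t}\omega$ on $(\Omega, \mathcal F, \mathbb P)$, whose generator reads $\mathcal L f(\omega) = \sum_{z\in\mathcal N} c^{\omega}(0,z)\bigl(f(\tau_z \omega) - f(\omega)\bigr)$. The doubly-stochastic identity \eqref{eqn:prop:doubly_stochastic} together with the shift-stationarity of $\mathbb P$ implies that $\mathbb P$ is invariant for $\bar\omega$, and ergodicity is inherited from Assumption~\ref{ass:P}-(i). The core of the construction is the \emph{corrector}: for each $k\in\{1,\ldots,d\}$, a measurable cocycle $\chi_k\colon \Omega\times\mathbb Z^d\to\mathbb R$ such that $\Phi_k(\omega, x) := x_k + \chi_k(\omega, x)$ is $L^{\omega}$-harmonic. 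Because $\mathcal L$ fails the sector condition when cycle lengths are unbounded, Lax--Milgram cannot be applied to $\mathcal L$ directly; instead I would apply it to the symmetric Dirichlet form $\mathcal E_{\mathrm s}$ built from $c_{\mathrm s}^{\omega}$, and verify that the local drift $V_k(\omega) := \sum_z c^{\omega}(0,z)\, z_k$ has finite $\mathcal H_{-1}$-norm with respect to $\mathcal E_{\mathrm s}$. A telescoping of $V_k$ along each oriented cycle followed by Cauchy--Schwarz bounds $\|V_k\|_{\mathcal H_{-1}}^{2}$ by a constant multiple of $\mathbb E[\mu^{(2),\omega}(0)]$, which is finite by Assumption~\ref{ass:p-q-moment}-(i); this is precisely why the length-weighted statistic $\mu^{(2),\omega}$ appears rather than merely $\mu^{\omega}$.

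With the corrector constructed, $M_t := X_t + \chi(\omega, X_t)$ is a square-integrable martingale under $\Prob_0^{\omega}$ whose increments are stationary and ergodic under $\mathbb P$; a standard martingale invariance principle (Helland's version) for additive functionals of ergodic Markov processes then yields that $n^{-1} M_{n^{2}\cdot}$ converges $\mathbb P$-a.s.\ in law to $\Sigma\cdot W$ with a deterministic covariance matrix $\Sigma^{2}$, whose non-degeneracy follows from a standard coercivity argument using Assumption~\ref{ass:P}-(ii). In order to transfer this conclusion to $X^{(n)}$ via the decomposition $X^{(n)}_t = n^{-1}M_{n^{2}t} - n^{-1}\chi(\omega, X_{n^{2}t})$, it remains to prove the \emph{sublinearity of the corrector}:
\begin{equation*}
  \lim_{n\to\infty}\;\frac{1}{n}\,\max_{|x|\leq n}\bigl|\chi(\omega, x)\bigr| \;=\; 0
  \qquad \mathbb P\text{-a.s.}
\end{equation*}
Pointwise sublinearity along any fixed direction follows routinely from the ergodic theorem applied to the cocycle; upgrading this to a uniform bound over $\{|x|\leq n\}$ is the main technical obstacle.

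For the uniform bound I would run a De Giorgi iteration on the sub-level sets $\{\chi\leq k\}$ in the spirit of \cite{ADS15,BS20}, based on three ingredients: the $\ell^{1}$-Sobolev inequality and the weighted Poincar\'e inequality on Euclidean balls, both of which are purely symmetric statements that transfer verbatim, together with an \emph{energy estimate} of the form $\mathcal E_{\mathrm s}(\eta^{2} v, v) \leq C\,\mathcal E_{\mathrm s}(v, \eta^{2}) + (\text{anti-symmetric perturbation})$ applied to $v = (\chi - k)_{+}$ with a spatial cutoff $\eta$. The anti-symmetric perturbation is the genuinely new difficulty in our non-reversible, unbounded-cycle-length setting: telescoping $c_{\mathrm a}^{\omega}$ along a cycle of length $|\vec\gamma|$ produces $|\vec\gamma|$-many cross terms, which after Young's inequality are absorbed at the cost of a factor that must be controlled by $\mu^{(1),\omega}$ and $\mu^{(2),\omega}$. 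This is the reason Assumption~\ref{ass:p-q-moment}-(i) is posed in the length-weighted form. Once the energy estimate is secured, the H\"older balance $1/p + 1/q < 2/d$ from Assumption~\ref{ass:p-q-moment} drives the iteration in the familiar manner and yields the desired $\ell^{\infty}$ bound. Substituting back into the martingale decomposition and invoking tightness of $n^{-1}M_{n^{2}\cdot}$ to control $X_{n^{2}t}$ then completes the proof.
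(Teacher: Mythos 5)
Your overall architecture (environment process, corrector, Helland's martingale FCLT, sublinearity via De Giorgi with Sobolev/Poincar\'e and a cycle-decomposition energy estimate) matches the paper, but there is a genuine flaw at the central step: you propose to construct the corrector by applying Lax--Milgram to the \emph{symmetric} Dirichlet form built from $c_{\mathrm{s}}^{\omega}$, with $V^{i}$ only required to lie in the corresponding $\mathcal{H}_{-1}$. That produces a $\chi^{i}$ solving the Poisson equation for the symmetrized generator $L^{\omega}_{\mathrm{s}}$, not for $L^{\omega}$. Consequently $\Phi^{i}=\Pi^{i}-\chi^{i}$ is harmonic only for the symmetrized walk, and $\Phi(\omega,X_{t})$ is \emph{not} a local martingale under $\Prob_0^{\omega}$ for the actual non-reversible dynamics: the decomposition $X^{(n)}_{t}=n^{-1}M_{n^{2}t}\mp n^{-1}\chi(\omega,X_{n^{2}t})$ acquires an uncontrolled drift term coming from the anti-symmetric part of the generator acting on $\chi^{i}$, and controlling it is exactly the difficulty you were trying to bypass. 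Moreover, your De Giorgi/maximal-inequality step is applied to functions that must be $L^{\omega}$-(sub)harmonic (this is where the cycle decomposition with non-negative weights enters the energy estimate), so it too presupposes the corrector equation for the full generator.

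The paper resolves this differently: Lax--Milgram does not require symmetry, only boundedness and coercivity, so it is applied to the regularized \emph{non-symmetric} form $B_{\lambda}(\xi,\varphi)=\mathcal{E}(\xi,\varphi)+\lambda\inp{\xi}{\varphi}_{L^{2}(\mu\mathbb{P})}$ on $L^{2}(\mu\mathbb{P})$. Boundedness is the weak sector condition \eqref{eqn:weak-sector}, which follows from $|c_{\mathrm{a}}^{\omega}|\leq c_{\mathrm{s}}^{\omega}$ and Jensen (no classical sector condition is needed), and coercivity comes from $\mathcal{E}(\varphi,\varphi)\geq 0$ plus the $\lambda$-term. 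The $\mathcal{H}_{-1}$-type bound on the drift (Proposition \ref{prop:weak_sec_H_-1}-(ii), via telescoping along cycles as you describe) then gives $\lambda$-uniform bounds on $\norm{D\varphi^{i}_{\lambda}}_{\mathrm{cov}}$, and one passes to the limit $\lambda\to 0$ by weak compactness and Mazur's lemma to obtain $\chi^{i}\in L^{2}_{\mathrm{pot}}$ satisfying $L^{\omega}\chi^{i}=V^{i}$, i.e.\ \eqref{eq:corrector_construction}, so that $\Phi^{i}$ is genuinely $L^{\omega}$-harmonic. Your proposal is repairable by replacing the symmetric-form Lax--Milgram step with this regularized non-symmetric construction (or an equivalent resolvent argument); as written, the martingale property and hence the entire transfer from $M$ to $X$ does not go through.
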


\section{Quenched invariance principle}
%
%
%
%
In this section, we deal with the proof of Theorem \ref{thm:QFCLT}. We will construct a random field $\chi\colon \Omega \times \mathbb{Z}^{d} \to \mathbb{R}^{d}$, a corrector to the process $X$ such that $M_{t} = X_{t} - \chi(\cdot, X_{t})$ is a local martingale under $\Prob_{0}^{\omega}$ for $\mathbb{P}$-a.e.\ $\omega$.  Then, we show that $\mathbb{P}$-a.s.\ the correction part of the random walk $\chi_{t} \equiv \chi(\cdot, X_{t})$ vanishes under the diffusive scaling, while $M_{t}$ follows the QIP.  We approximate our corrector based on a sequence in a Hilbert space  $L^{2}(\mu\mathbb{P})$ (defined below), to which the study of an environment process from the viewpoint of the walker is relevant. More precisely, the stochastic process $X$ induces a continuous-time environment Markov chain $\tau_{X} \omega \equiv (\tau_{X_t}\omega : t \geq 0)$ on $\Omega$. Its generator, $\mathcal{L}$, acts on bounded and measurable functions $\varphi\colon \Omega \to \mathbb{R}$ as
\begin{align}
  \label{eq:def:generator}
  \bigl( \mathcal{L} \varphi \bigr)(\omega)
  \;\ldef\;
  \sum_{z \in \mathcal{N}} c^{\omega}(0, z)\, \bigl( \varphi(\tau_{z} \omega) - \varphi(\omega) \bigr).
\end{align}
The associated Dirichlet form $\mathcal{E}$ acts on bounded and measurable functions $\xi, \varphi\colon \Omega \to \mathbb{R}$ is given by
\begin{align}
  \label{eq:def:dirichlet_form_annealed}
  \mathcal{E}(\xi, \varphi)
  \;\ldef\;
  \mean\bigl[ \xi\, (-\mathcal{L}\varphi) \bigr]. 
\end{align}
In particular, under the stationarity of $\mathbb{P}$ (in Assumption \ref{ass:P}-(i)),
\begin{align}
  \label{eq:prop:dirichlet_norm}
  \mathcal{E}(\varphi, \varphi)
  \;=\;
  \frac{1}{2}\,
  \mean\Biggl[
    \sum_{z \in \mathcal{N}} c_{\mathrm{s}}^{\omega}(0, z)\,
    \bigl(\varphi(\tau_{z} \omega) - \varphi(\omega) \bigr)^{2}
  \Biggr].
\end{align}
Next, we justify that the process $\tau_{X}\omega$ is stationary and ergodic with respect to $\mathbb{P}$, and has non-trivial time evolution.
\begin{prop}
  \label{prop:time-ergodicity-and-non-explosion}
  Suppose Assumption~\ref{ass:P}-(i), \eqref{eqn:prop:doubly_stochastic}, and \eqref{eq:cond:1st-moment} hold. Then, the stochastic processes $X$ and $\tau_X\omega$ are non-explosive, thus well-defined for all $t \geq 0$. Moreover, $\mathbb{P}$ is stationary and ergodic for the induced continuous-time environment  Markov chain $\tau_{X} \omega$, for $\mathbb{P}$-a.e. $\omega$.
\end{prop}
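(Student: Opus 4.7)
The plan is to establish the three claims—non-explosion, stationarity, ergodicity—in that order, relying throughout on the doubly stochastic identity~\eqref{eqn:prop:doubly_stochastic} together with the space stationarity of $\prob$.

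For non-explosion, I would construct the minimal version of $X$ on $[0,\zeta)$, where $\zeta$ is the explosion time, via its embedded jump chain and independent exponential holding times; this is well-posed for $\prob$-a.e.\ $\omega$ since Assumption~\ref{ass:P}-(iii) combined with space stationarity yields $\mu^\omega(x)<\infty$ for every $x$ a.s. Denote by $N_t$ the number of jumps of $X$ in $[0,t]$. The compensator identity reads
\[
  \mean\bigl[\Mean_0^\omega[N_{t\wedge\zeta}]\bigr]
  \;=\;
  \int_0^t \mean\bigl[\Mean_0^\omega\bigl[\mu^\omega(X_s)\,\indicator_{s<\zeta}\bigr]\bigr]\,\md s,
\]
and using $\mu^\omega(y)=\mu^{\tau_y\omega}(0)$, the change of variables $\omega\mapsto\tau_y\omega$, and the shift relation $p^{\tau_z\omega}(t,x,y)=p^\omega(t,x+z,y+z)$, the integrand rewrites as
\[
  \mean\bigl[\Mean_0^\omega\bigl[\mu^\omega(X_s)\,\indicator_{s<\zeta}\bigr]\bigr]
  \;=\;
  \mean\Bigl[\mu^\omega(0)\,\sum_{y\in\mathbb{Z}^d} p^\omega(s,y,0)\Bigr].
\]
The inner sum equals the survival probability at time $s$ of the dual chain with rates $\hat c^\omega(x,y)\ldef c^\omega(y,x)$ and is therefore bounded by $1$ automatically. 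Consequently $\mean[\Mean_0^\omega[N_{t\wedge\zeta}]]\leq t\,\mean[\mu^\omega(0)]<\infty$, which forces $N_{t\wedge\zeta}<\infty$ $\Prob_0^\omega$-a.s.; since $N_\zeta=\infty$ on $\{\zeta<\infty\}$ by construction of the minimal process, this rules out $\zeta\leq t$, and the arbitrariness of $t$ gives $\zeta=\infty$ $\Prob_0^\omega$-a.s.\ for $\prob$-a.e.\ $\omega$. The identical argument applied to the dual chain $\hat X$ shows that $\hat X$ is also non-explosive.

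Once both $X$ and $\hat X$ are non-explosive, $\sum_y p^\omega(s,y,0)=1$ $\prob$-a.s.\ for every $s\geq 0$, and the chain of identities used above immediately upgrades to $\mean[\varphi(\tau_{X_s}\omega)]=\mean[\varphi(\omega)]$ for every bounded $\varphi$, which is $\prob$-invariance of the environment semigroup $(P_t)_{t\geq 0}$. Ergodicity then follows from the Dirichlet form identity: if $\varphi\in L^2(\prob)$ is $P_t$-invariant for every $t$, then $\mathcal{L}\varphi=0$ in $L^2(\prob)$, and~\eqref{eq:prop:dirichlet_norm} forces
\[
  \mean\Bigl[\sum_{z\in\mathcal{N}} c_{\mathrm{s}}^\omega(0,z)\bigl(\varphi(\tau_z\omega)-\varphi(\omega)\bigr)^2\Bigr]
  \;=\;
  0.
\]
By~\eqref{ass:symmetric-ellipticity-2} one has $c_{\mathrm{s}}^\omega(0,z)>0$ $\prob$-a.s.\ for every $z\in\mathcal{N}$, so $\varphi\circ\tau_z=\varphi$ $\prob$-a.s.\ for each nearest-neighbor shift. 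Since $\mathcal{N}$ generates $\mathbb{Z}^d$ as a group, $\varphi$ is invariant under the entire shift group, and the ergodicity in Assumption~\ref{ass:P}-(i) forces $\varphi$ to be $\prob$-a.s.\ constant.

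The main obstacle is the apparent circularity of the non-explosion step, where controlling $\mu^\omega(X_s)$ in expectation seems to require knowing a priori that the environment process is stationary—which in turn needs non-explosion. This is resolved by the observation that the sub-probability bound $\sum_y p^\omega(s,y,0)\leq 1$ is automatic for any minimal sub-Markov kernel, so the non-explosion argument is self-contained; the remaining steps are then purely algebraic consequences of the doubly stochastic identity, the translation invariance of $\prob$, the Dirichlet form expression~\eqref{eq:prop:dirichlet_norm}, and the ellipticity of the symmetric part.
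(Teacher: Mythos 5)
Your proof is correct, but it takes a genuinely different route from the paper's for the non-explosion step. The paper passes through the embedded discrete-time jump chain $Y_n=X_{\nu_n}$: it shows the $\mu\mathbb{P}$-measure is stationary and ergodic for $\tau_Y\omega$, invokes Birkhoff's ergodic theorem to get $\tfrac1n\sum_{k=0}^{n-1}\mu^{\tau_{Y_k}\omega}(0)^{-1}\to\mathbb{E}[\mu^\omega(0)]^{-1}<\infty$, and then cites a result of Liggett to translate this into almost-sure divergence of $\sum_k\mu^{\tau_{Y_k}\omega}(0)^{-1}$, hence non-explosion. You instead prove non-explosion directly by an expected-number-of-jumps bound: the compensator identity plus translation invariance of $\prob$ rewrites $\mean\bigl[\Mean_0^\omega[N_{t\wedge\zeta}]\bigr]$ as $\int_0^t\mean\bigl[\mu^\omega(0)\sum_y p^\omega(s,y,0)\bigr]\,\md s$, and the inner sum is controlled by identifying it with the survival probability of the time-reversed chain. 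This is a self-contained argument and avoids the external citation, but note two things: (1) it relies on the non-trivial fact that for a doubly balanced, stable $Q$-matrix the minimal kernel satisfies the duality $\hat p^\omega(t,x,y)=p^\omega(t,y,x)$ (equivalently, that the minimal backward and minimal forward solutions coincide); this deserves a citation (e.g.\ Anderson or Norris) rather than being folded into "automatically." (2) Your closing remark that the bound $\sum_y p^\omega(s,y,0)\leq 1$ "is automatic for any minimal sub-Markov kernel" is an overstatement: sub-Markovianity gives $\sum_y p^\omega(s,x,y)\leq 1$ in the \emph{second} argument, whereas the bound you use is in the \emph{first} argument, and that one genuinely needs the doubly stochastic structure, exactly as your dual-chain reasoning shows. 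The stationarity and ergodicity steps match the paper's in substance: once the kernel is a proper Markov kernel, the same translation identity gives $\prob$-invariance, and ergodicity follows from the Dirichlet-form identity~\eqref{eq:prop:dirichlet_norm} together with~\eqref{ass:symmetric-ellipticity-2} and shift-ergodicity of $\prob$.
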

\begin{proof}[Proof outline]
  We define a finite\footnote{$\mu\mathbb{P}$ is a finite measure since $\int_{\omega \in \Omega} \dd \mu\mathbb{P}= \mathbb{E}[\mu^\omega(0)] <\infty.$} measure  $\mu\mathbb{P}$ by $\dd (\mu\mathbb{P})(\omega)\ldef \mu^\omega(0)\dd \mathbb{P}(\omega).$ Let $(\nu_n)_{n\in \mathbb{N}}$ denote the sequence of jumping times of $X$, and $\nu_0:=0.$ It follows easily from the assumptions, that $\mu\mathbb{P}$ is stationary and ergodic for the embedded discrete environment Markov chain $\tau_Y\omega\equiv (\tau_{Y_n}\omega: n\in \mathbb{N}_0)$, where $Y_n^\omega:= X_{\nu_n}^\omega$. The time stationarity is due to the spacial stationarity and doubly stochasticity, since
  \begin{align*}
    \mathbb{E}\bigl[\mathcal{L}\phi\bigr]
    &\;=\;
    \mathbb{E}\Biggl[
      \sum_{z \in \mathcal{N}} c^{\omega}(0,z)\, \phi(\tau_{z}\omega)
    \Biggr]
    - \mathbb{E}\bigl[ \mu^{\omega}(0)\, \phi(\omega) \bigr]
    \\
    &\;=\;
    \mathbb{E}\Biggl[
      \sum_{z \in \mathcal{N}} c^{\tau_{-z}\omega}(0,z)\, \phi(\omega)
    \Biggr]
    - \mathbb{E}\bigl[ \mu^{\omega}(0)\, \phi(\omega) \bigr]
    \;=\;
    0,
  \end{align*}
  for all $\phi$ bounded and measurable. Moreover, for the time ergodicity, we have for every invariant set $A \in \mathcal{F}$, $\mathcal{L}\mathbbm{1}_{A} = 0$, hence
  \begin{align*}
    0
    \;=\;
    \mathbb{E}\bigl[ \mathbbm{1}_{A}(-\mathcal{L})\mathbbm{1}_{A} \bigr]
    \;=\;
    \frac{1}{2}\,
    \mathbbm{E}\Biggl[
      \sum_{z \in \mathcal{N}}c^{\omega}_{s}(0,z)\,
      (\mathbbm{1}_{A} \circ \tau_{z} - \mathbbm{1}_{A})^{2}
    \Biggr].
  \end{align*}
  It then follows that $\mathbbm{1}_{A} \circ \tau_{x} = \mathbbm{1}_{A}$ for all $x \in \mathbb{Z}^{d}$. Further, by Birkhoff's ergodic theorem, $\lim_{n \to \infty} \frac{1}{n} \sum_{k=0}^{n-1} (\mu^{\tau_{Y_n}\omega}(0))^{-1} = \mathbb{E}[\mu^\omega(0)]^{-1} < \infty$, and by Theorem2.33 in \cite[p.97]{Lig10}, this is equivalent to the non-explosiveness for $X$ (thus $\tau_X\omega$), i.e. $\lim_{k~\to \infty}\nu_{k} = \infty$, $\mathbb{P} \otimes \Prob^{\omega}_{0}$ almost surely. The non-explosiveness implies that $\tau_{X} \omega$ as well as $X$ is well-defined for all $t$.  The stationary and ergodic properties of $\mathbb{P}$ for the process $\tau_{X} \omega$ follow by the same arguments used for the discrete chain. 
\end{proof}
For later use, we denote by $L^{2}(\mu\mathbb{P}), L^{2}(\mathbb{P})$ the space of square-integrable functions on $(\Omega, \mathcal{F})$ with respect to the finite measure $\mu\mathbb{P}$ and the probability measure $\mathbb{P}$. Further let $\inp{\cdot }{\cdot }_{L^2(\mu\mathbb{P})}, \inp{\cdot }{\cdot }_{L^2(\mathbb{P})}$ be the corresponding inner products.

\subsection{Corrector construction and harmonic coordinates}
To begin, we introduce some preliminaries before presenting the construction result in Lemma~\ref{lem:corrector_construction}.

\begin{definition}[Cycle condition] 
  \label{def: cycle conditon}
  A random vector $\Psi\colon \Omega \times \mathcal{N} \to \mathbb{R}$, satisfies the \emph{cycle condition} if for any sequence $(x_{0}, \cdots, x_{n})$ in $\mathbb{Z}^{d}$, with $x_{0} = x_{n}$ and $(x_{j-1}, x_{j}) \in \vec{E}_d$, $1 \leq j \leq n$, it holds that
  \begin{align}
    \label{eq:def:cycle_condition}
    \sum_{j=1}^{n}\Psi(\tau_{x_{j-1}}\omega, x_{j} - x_{j-1}) \;=\; 0.
  \end{align}
\end{definition}
\begin{definition}[Cocycle property]
  \label{def: cocycle property}
  A random field $\Psi\colon \Omega \times \mathbb{Z}^{d} \to \mathbb{R}$ satisfies the \emph{cocycle property} if for $\mathbb{P}$-a.e.\ $\omega$
  \begin{align}
    \label{eq:def:cocycle_property}
    \Psi(\omega, y) - \Psi(\omega,x)
    \;=\;
    \Psi(\tau_{x}\omega, y-x),
    \qquad \forall\, x, y \in \mathbb{Z}^{d}.
  \end{align}
\end{definition}
Denote by $\mathcal{G}$ the set of random vectors $\Omega \times \mathcal{N}\ to \mathbb{R}$ that satisfy the cycle condition. Let
\begin{align*}
  \Norm{\Psi}{\mathrm{cov}}^2
  \;\ldef\;
  \frac{1}{2}\,
  \mathbb{E}\Biggl[
    \sum_{z \in \mathcal{N}} c_{\mathrm{s}}^{\omega}(0, z)\, \Psi(\omega, z)^{2}
  \Biggr],
  \qquad \Psi \in \mathcal{G}, 
\end{align*}
and $L^{2}_{\mathrm{cov}} \ldef \{\Psi \in \mathcal{G} : \Norm{\Psi}{\mathrm{cov}} < \infty\}$.
\begin{remark}
  For $\Psi \in \mathcal{G}$, by the cycle condition, $\Psi(\omega, z) = \Psi(\tau_{z}\omega, -z)$, for $(\omega, z) \in \Omega \times \mathcal{N}$. It follows from the stationarity in Assumption~\ref{ass:P}-(i), that
  \begin{align*}
    \Norm{\Psi}{\mathrm{cov}}^2
    \;=\;
    \frac{1}{2}\,
    \mathbb{E}\Biggl[
      \sum_{z \in \mathcal{N}} c^{\omega}(0, z)\, \Psi(\omega, z)^{2}
    \Biggr].
  \end{align*}
  Furthermore, under the finite cycle decomposition \eqref{eq:def:random_edge_weights}, we have
  \begin{align}
    \Norm{\Psi}{\mathrm{cov}}^{2}
    &\;=\;
    \frac{1}{2}\,
    \mathbb{E}\Biggl[
      \sum_{\vec{\gamma}\in \Gamma_0}\sum_{z\in \mathcal{N}}\sum_{x\in \mathbb{Z}^d}\omega_{\vec{\gamma}}(x)\mathbbm{1}_{(0,z)\in x+ \vec{\gamma}}\, \Psi(\omega, z)^2
    \Biggr]
    \nonumber \\
    &\;=\;
    \frac{1}{2}\,
    \mathbb{E}\Biggl[
      \sum_{\vec{\gamma} \in \Gamma_{0}}
      \sum_{(x,y) \in \vec{\gamma}} \omega_{\vec{\gamma}}(0)\, \Psi(\tau_{x}\omega, y-x)^{2}
    \Biggr].
    \label{eq:cov_norm_cycle}
  \end{align}
\end{remark}
It is easy to check the following result:
\begin{prop} 
  \label{prop:cocycle_extension}
  \begin{enumerate}[(i)]
  \item
    Every $u \in \mathcal{G}$ has a unique extension to a random field that satisfies the cocycle property.
  \item
    Suppose $\mathbb{P}[0 < c_{\mathrm{s}}^{\omega}(0,z) < \infty] = 1$, for all $z \in \mathcal{N}$. Then, $(L^{2}_{\mathrm{cov}}, \Norm{\cdot}{\mathrm{cov}})$ is a Hilbert space. 
  \end{enumerate}
\end{prop}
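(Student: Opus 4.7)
For \textbf{part (i)}, my plan is to extend $u \in \mathcal{G}$ by path-telescoping. Given $x \in \mathbb{Z}^{d}$, pick any finite nearest-neighbor path $(0 = v_{0}, v_{1}, \ldots, v_{n} = x)$ in $\mathbb{Z}^{d}$ and set
\begin{align*}
  \Psi(\omega, x) \;\ldef\; \sum_{j=1}^{n} u\bigl(\tau_{v_{j-1}}\omega,\, v_{j} - v_{j-1}\bigr).
\end{align*}
Well-definedness reduces to path-independence: the concatenation of a path from $0$ to $x$ with the reversal of a second such path forms a closed edge-cycle based at $0$, on which \eqref{eq:def:cycle_condition} forces the two telescoping sums to agree $\mathbb{P}$-a.s. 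The cocycle property \eqref{eq:def:cocycle_property} then follows by concatenating a path from $0$ to $x$ with one from $x$ to $y$; the latter, after translating by $-x$, is realized as a path from $0$ to $y - x$ evaluated in the shifted environment $\tau_{x}\omega$. Uniqueness is immediate, since any cocycle extension coinciding with $u$ on $\mathcal{N}$ is forced by telescoping along any edge-path.

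For \textbf{part (ii)}, my plan is to realize $L^{2}_{\mathrm{cov}}$ as a closed linear subspace of an ambient weighted $L^{2}$-space. Introduce
\begin{align*}
  H \;\ldef\; L^{2}\Bigl( \Omega \times \mathcal{N},\, \tfrac{1}{2}\, c_{\mathrm{s}}^{\omega}(0, z)\, \md\mathbb{P}(\omega)\, \md\#(z) \Bigr),
\end{align*}
where $\md\#$ denotes the counting measure on $\mathcal{N}$. This is a Hilbert space by general principles, whose norm agrees with $\Norm{\cdot}{\mathrm{cov}}$. The ellipticity hypothesis $0 < c_{\mathrm{s}}^{\omega}(0, z) < \infty$ $\mathbb{P}$-a.s.\ for every $z \in \mathcal{N}$ ensures that $\mathbb{P} \otimes \md\#$-null sets and $\tfrac{1}{2} c_{\mathrm{s}}^{\omega}(0, z)\, \md\mathbb{P}\, \md\#$-null sets coincide; thus $\mathbb{P}$-a.s.\ equivalence classes are in bijection with elements of $H$, and $\Norm{\cdot}{\mathrm{cov}}$ is a bona fide norm on $\mathcal{G}$ rather than a mere seminorm.

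It then suffices to show that $L^{2}_{\mathrm{cov}} \subset H$ is closed. Given a Cauchy sequence $(\Psi_{n}) \subset L^{2}_{\mathrm{cov}}$ with $H$-limit $\Psi$, I would pass to a subsequence $(\Psi_{n_{k}})$ with $\Psi_{n_{k}}(\cdot, z) \to \Psi(\cdot, z)$ $\mathbb{P}$-a.s.\ for each $z \in \mathcal{N}$; this is possible because $\mathcal{N}$ is finite. For each of the countably many finite nearest-neighbor edge-cycles $(x_{0}, \ldots, x_{m} = x_{0})$ in $\mathbb{Z}^{d}$, the relation \eqref{eq:def:cycle_condition} for $\Psi_{n_{k}}$ involves only finitely many summands and hence transfers to $\Psi$ outside a $\mathbb{P}$-null set. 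Gathering these countably many null sets into one shows $\Psi \in \mathcal{G}$, and therefore $\Psi \in L^{2}_{\mathrm{cov}}$.

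I expect the only real subtlety to be the bookkeeping of $\mathbb{P}$-a.s.\ statements: the cycle condition is a ``for every cycle, $\mathbb{P}$-a.s.''\ quantifier, whereas $H$-convergence only yields pointwise $\mathbb{P}$-a.s.\ convergence along a subsequence. The resolution, as above, is that $\mathbb{Z}^{d}$ admits only countably many finite edge-cycles, so the exceptional sets can be merged. The ellipticity hypothesis plays a dual role: it makes $\Norm{\cdot}{\mathrm{cov}}$ positive-definite, and it guarantees that $H$-convergence forces pointwise $\mathbb{P}$-a.s.\ convergence of the $\Psi_{n}(\cdot, z)$ themselves, without which the cycle relation could fail to transfer to the limit on a set of positive measure.
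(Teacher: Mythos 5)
Your proof is correct and follows exactly the route the paper intends: the paper states Proposition~\ref{prop:cocycle_extension} without proof (``it is easy to check''), and your path-telescoping extension for (i) and the closed-subspace-of-a-weighted-$L^{2}$ argument for (ii) are the standard verifications. The one step worth making explicit in (ii) is that the cycle relation \eqref{eq:def:cycle_condition} evaluates $\Psi_{n_{k}}$ at the shifted environments $\tau_{x_{j-1}}\omega$, so carrying the $\mathbb{P}$-a.s.\ subsequential convergence over to those terms uses the shift-invariance of $\mathbb{P}$ from Assumption~\ref{ass:P}-(i); with that observation the countable union over cycles closes the argument as you describe.
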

Denote by $D = (D_{z})_{z \in \mathcal{N}}$ the \emph{horizontal derivative} operator on measurable functions $\varphi\colon \Omega \to \mathbb{R}$, with
\begin{align*}
  D_{z}\varphi(\omega) \;\ldef\; \varphi(\tau_z\omega) - \varphi(\omega).
\end{align*}
We write $(D\varphi)(\omega, z) = D_{z}\varphi(\omega)$. Let $\mathbb{B}$ be the set of bounded and measurable functions on $(\Omega, \mathcal{F})$, and write $D\mathbb{B} = \{D\varphi : \varphi \in \mathbb{B}\}$.  Denote by $L^{2}_{\mathrm{pot}}$ the closure of $D\mathbb{B}$ in $L^{2}_{\mathrm{cov}}$, and $L^{2}_{\mathrm{sol}}$ the orthogonal complement of $L^{2}_{\mathrm{pot}}$.

For each $i = 1, \dots, d$, define a random field $\Pi^{i}\colon \Omega \times \mathbb{Z}^{d} \to \mathbb{R}, (\omega, x) \mapsto x^{i}$. We call $\Pi^{i}$ the \emph{position field} in the $i$-th coordinate. Further, let
\begin{align}
  \label{eq:def:local_drift}
  V^{i}(\omega, x)
  \;\ldef\;
  L^{\omega}\Pi^{i}(\omega, x)
  \;=\;
  \sum_{z \in \mathcal{N}} c^{\omega}(x, x+z) z^{i},
\end{align}
with $L^{\omega}$ as defined in \eqref{eq:def:generator_quenched}, $z^{i}$ denoting the $i$-th coordinate of $z$. $V^{i}$ is called the \emph{local drift} in the $i$-th coordinate. 

\begin{remark}
  \label{rm:local_drift}
  In view of \eqref{prop:shifted-rate}, $V^{i}(\omega, x) = V^{i}(\tau_{x} \omega, 0)$, for any $i = 1, \dots, d$ and $x \in \mathbb{Z}^{d}$.
\end{remark}

We state a sufficient condition that ensures the local drift $V^i$ has a finite $\mathcal{H}_{-1}$ norm\footnote{
  The $\mathcal{H}_{-1}$ norm is with respect to a pre-Hilbert space $\mathcal{H} \subset L^{2}(\mu \mathbb{P})$, with $\inp{\xi}{\varphi}_H \ldef \mathbb{E}[\xi ( -\mathcal{L}_{\mathrm{s}}\varphi)]$, with $\mathcal{L}_{\mathrm{s}}= \frac{1}{2}(\mathcal{L} + \mathcal{L}^{\ast})$ and $\mathcal{L}^{\ast}$ being the adjoint of $\mathcal{L}$. It follows, $\inp{\xi}{\xi}_H = \norm{D\xi}^{2}_{\mathrm{cov}}$.
}.
\begin{assumption}
  \label{ass:finite_H_-1_norm}
  Assume that
  \begin{align}
    \label{cond:finite_H_-1_norm}
    \alpha
    \;\ldef\;
    \max_{i = 1, \dots, d}
    \sqrt{
      \mathbb{E}\Biggl[
        \sum_{\vec{\gamma} \in \Gamma_{0}} \omega_{\vec{\gamma}}(0)\, \sum_{x \in \vec{\gamma}}(x^{i})^{2}
      \Biggr]
    }
    \;<\;
    \infty, 
  \end{align}
  where $x^i$ denotes the $i$-th coordinate of $x$. 
\end{assumption}
\begin{remark} 
  Assumption~\ref{ass:finite_H_-1_norm} is satisfied when $\mathbb{E}\bigl[ \sum_{\vec{\gamma} \in \Gamma_{0}} \omega_{\vec{\gamma}}(0) \abs{\vec{\gamma}}^{3}\bigr] < \infty$, which is covered by Assumption~\ref{ass:p-q-moment}-(i) with $p = 1$.
\end{remark}
\begin{remark}
  According to \cite{KT17}, in the doubly stochastic case (\eqref{eqn:prop:doubly_stochastic} is satisfied),  if  $c_{\mathrm{s}}^{\omega}$ is strong-elliptic, then $\mathcal{H}_{-1}$ condition is equivalent to the existence of $L^{2}$-integrable stream tensors. The stream tensors can be interpreted as special length-four cycles that allow negative cycle weights. 
\end{remark}
The main result of this subsection is as follows:
\begin{lemma}[Corrector construction]
  \label{lem:corrector_construction}
  Suppose Assumption~\ref{ass:P} and \ref{ass:finite_H_-1_norm} hold. Then, for each $i = 1, \dots, d$,
  \begin{enumerate}[(i)]
  \item
    there exists a random field $\chi^{i}\colon \Omega \times \mathbb{Z}^{d} \to \mathbb{R}$ that satisfies the cocycle property such that its restriction $\chi^{i}\big|_{\Omega \times \mathcal{N}} \in L^{2}_{\mathrm{pot}}$, and, for $\mathbb{P}$-a.e.\ $\omega$ and every $x \in \mathbb{Z}^{d}$,
    \begin{align}
      \label{eq:corrector_construction}
      L^{\omega} \chi^{i}(\omega, x) \;=\; V^{i}(\omega, x). 
    \end{align}
    
  \item
    $\Phi^{i} \ldef \Pi^{i} - \chi^{i}$ satisfies $\Phi^{i}\big|_{\Omega \times \mathcal{N}} \in L^{2}_{\mathrm{cov}}$, and, for $\mathbb{P}$-a.e.\ $\omega$ and every $x \in \mathbb{Z}^d$,
    \begin{align}
      \label{eqn:harmonic-coordinates}
      L^{\omega}\Phi^{i}(\omega, x) \;=\; 0.
    \end{align}
  \end{enumerate}
\end{lemma}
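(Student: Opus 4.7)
The plan is to follow the Kipnis--Varadhan scheme: construct $\chi^{i}|_{\Omega\times\mathcal{N}} \in L^{2}_{\mathrm{pot}}$ as the $L^{2}_{\mathrm{cov}}$-limit of the horizontal gradients of the resolvents of $\mathcal{L}$ applied to the local drift, and then extend to a cocycle via Proposition~\ref{prop:cocycle_extension}(i). By Remark~\ref{rm:local_drift} and the cocycle property, the pointwise equation \eqref{eq:corrector_construction} at arbitrary $x$ reduces to the single $\mathbb{P}$-a.s.\ identity
\begin{equation*}
  \sum_{z\in\mathcal{N}} c^{\omega}(0,z)\,\chi^{i}(\omega,z)
  \;=\; \tilde V^{i}(\omega),
  \qquad
  \tilde V^{i}(\omega)
  \;\ldef\;
  V^{i}(\omega,0)
  \;=\;
  \sum_{z\in\mathcal{N}} c^{\omega}(0,z)\,z^{i},
\end{equation*}
and the doubly-stochastic identity \eqref{eqn:prop:doubly_stochastic} combined with stationarity gives $\mathbb{E}[\tilde V^{i}] = 0$.

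The central analytic input is an $\mathcal{H}_{-1}$-type estimate for $\tilde V^{i}$. Expanding via the cycle decomposition \eqref{eq:def:random_edge_weights} and shifting by stationarity yields, for every $\varphi\in\mathbb{B}$ and with $\vec\gamma = (v_{0},\ldots,v_{n}=v_{0})$, $v_{0}=0$,
\begin{equation*}
  \mathbb{E}[\varphi\,\tilde V^{i}]
  \;=\;
  \sum_{\vec\gamma\in\Gamma_{\!0}}
  \mathbb{E}\!\left[
    \omega_{\vec\gamma}(0)
    \sum_{j=1}^{n}
    \varphi(\tau_{v_{j-1}}\omega)\bigl(v_{j}^{i} - v_{j-1}^{i}\bigr)
  \right].
\end{equation*}
An Abel summation along the closed cycle recasts the inner sum as $-\sum_{j=1}^{n-1} v_{j}^{i}\, D_{v_{j}-v_{j-1}}\varphi(\tau_{v_{j-1}}\omega)$; one Cauchy--Schwarz and the cycle identity \eqref{eq:cov_norm_cycle} then yield $|\mathbb{E}[\varphi\,\tilde V^{i}]| \le \sqrt{2}\,\alpha\,\Norm{D\varphi}{\mathrm{cov}}$, consuming exactly the weighted second-moment bound of Assumption~\ref{ass:finite_H_-1_norm}.

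For each $\lambda>0$ I then invoke Lax--Milgram on the completion of $\mathbb{B}$ under $\lambda\|\cdot\|_{L^{2}(\mathbb{P})}^{2} + \Norm{D\cdot}{\mathrm{cov}}^{2}$ with the bilinear form $a_{\lambda}(u,v) \ldef \lambda\,\mathbb{E}[uv] + \mathcal{E}(u,v)$; coercivity is immediate since the anti-symmetric part drops out of $a_{\lambda}(u,u) = \lambda\|u\|_{L^{2}(\mathbb{P})}^{2} + \Norm{Du}{\mathrm{cov}}^{2}$. This produces a unique $u_{\lambda}$ solving $(\lambda - \mathcal{L})u_{\lambda} = \tilde V^{i}$. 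Testing against $u_{\lambda}$ and using the $\mathcal{H}_{-1}$ bound delivers the uniform a priori estimates $\Norm{Du_{\lambda}}{\mathrm{cov}} \le \sqrt{2}\,\alpha$ and $\sqrt{\lambda}\,\|u_{\lambda}\|_{L^{2}(\mathbb{P})} \le \sqrt{2}\,\alpha$. A standard Kipnis--Varadhan argument---monotonicity of $\lambda\mapsto\Norm{Du_{\lambda}}{\mathrm{cov}}^{2}$ controlled by the symmetric part---then shows $\{Du_{\lambda}\}_{\lambda\downarrow 0}$ is Cauchy in $L^{2}_{\mathrm{cov}}$, with limit $\Psi^{i}\in L^{2}_{\mathrm{pot}}$. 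Extending $\Psi^{i}$ via Proposition~\ref{prop:cocycle_extension}(i) produces the desired $\chi^{i}$; passing to the limit in the weak equation---using $\lambda\mathbb{E}[u_{\lambda}\varphi]\to 0$, strong convergence of $Du_{\lambda}$, and the duality $\mathcal{L}_{a}^{\ast} = -\mathcal{L}_{a}$ to reroute the antisymmetric contribution through $Du_{\lambda}$---verifies \eqref{eq:corrector_construction}.

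Part~(ii) then follows at once: $\Phi^{i}\ldef\Pi^{i}-\chi^{i}$ is a difference of cocycles hence a cocycle, $\Pi^{i}|_{\Omega\times\mathcal{N}}\in L^{2}_{\mathrm{cov}}$ because $\Norm{\Pi^{i}}{\mathrm{cov}}^{2} \le \tfrac{1}{2}\mathbb{E}[\mu^{\omega}(0)] < \infty$ by Assumption~\ref{ass:P}-(iii), and $L^{\omega}\Phi^{i} = V^{i}-V^{i} = 0$ by linearity. The principal obstacle is the $\mathcal{H}_{-1}$ estimate itself: the loss of a sector condition precludes bounding $\mathcal{E}_{a}(u,v)$ by $\Norm{Du}{\mathrm{cov}}\Norm{Dv}{\mathrm{cov}}$, so the cycle-wise Abel summation---and hence the weighted second-moment bound of Assumption~\ref{ass:finite_H_-1_norm}---is indispensable. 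A secondary subtlety is the passage to the limit in the anti-symmetric part of the weak equation, since $\mathcal{L}_{a}\varphi$ need not belong to $L^{2}(\mathbb{P})$ under only Assumption~\ref{ass:P}-(iii); the duality $\mathcal{L}_{a}^{\ast} = -\mathcal{L}_{a}$ handles this by transferring the operator onto $u_{\lambda}$.
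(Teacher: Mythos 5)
Your overall strategy---regularized Lax--Milgram, uniform a priori bounds from the cycle-wise $\mathcal{H}_{-1}$ estimate, passage to the limit $\lambda\downarrow 0$---matches the paper's, and your derivation of the $\mathcal{H}_{-1}$ bound via Abel summation along each cycle is essentially Proposition~\ref{prop:weak_sec_H_-1}(ii). Two steps, however, do not hold up. First, the Hilbert space on which you invoke Lax--Milgram is inadequate: on the completion of $\mathbb{B}$ under $\norm{u}_\lambda^2 \ldef \lambda\norm{u}^2_{L^2(\mathbb{P})} + \norm{Du}^2_{\mathrm{cov}}$, coercivity of $a_\lambda$ is trivial, but boundedness $\abs{\mathcal{E}(u,v)} \lesssim \norm{u}_\lambda\norm{v}_\lambda$ fails absent a sector condition. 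Using $\abs{c^\omega_{\mathrm{a}}}\le c^\omega_{\mathrm{s}}$ and Cauchy--Schwarz one only gets $\abs{\mathcal{E}_{\mathrm{a}}(u,v)}\le\sqrt{2}\,\norm{u}_{L^2(\mu\mathbb{P})}\norm{Dv}_{\mathrm{cov}}$, and since $\mu^\omega(0)$ is merely integrable the norm $\norm{u}_{L^2(\mu\mathbb{P})}$ is not dominated by $\norm{u}_\lambda$. The paper works on $L^2(\mu\mathbb{P})$ itself, where the weak sector condition (Proposition~\ref{prop:weak_sec_H_-1}(i)) gives boundedness of $\mathcal{E}$ and $B_\lambda(u,u)\ge\lambda\norm{u}^2_{L^2(\mu\mathbb{P})}$ gives coercivity; this choice is not incidental.

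Second, the claimed ``standard Kipnis--Varadhan monotonicity'' showing $\{Du_\lambda\}$ is Cauchy in $L^2_{\mathrm{cov}}$ is not available without reversibility or a sector condition. Testing the difference of the two resolvent equations against $u_\lambda - u_\mu$ gives
\begin{align*}
  \lambda\norm{u_\lambda - u_\mu}^2 + \norm{D(u_\lambda - u_\mu)}^2_{\mathrm{cov}}
  \;=\;
  (\mu - \lambda)\,\inp{u_\mu}{u_\lambda - u_\mu},
\end{align*}
and the a priori bound $\sqrt{\lambda}\,\norm{u_\lambda}\le\sqrt{2}\,\alpha$ alone controls the right-hand side only up to a factor of order $\lambda/\mu$, which is unbounded as $\mu\downarrow 0$ with $\lambda$ fixed; the spectral calculus that rescues this in the symmetric case, and the sector condition that would absorb the anti-symmetric cross terms, are both unavailable here. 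The paper's proof deliberately avoids strong convergence of the resolvent gradients: it extracts a weakly convergent subsequence $D\varphi^i_{\lambda_n}\rightharpoonup\chi^i$ in $L^2_{\mathrm{pot}}$ from the uniform bound $\norm{D\varphi^i_\lambda}_{\mathrm{cov}}\le\sqrt{2}\alpha$ and then applies Mazur's lemma to produce strongly convergent convex combinations $D\varphi^{i,n}$, with which one can legitimately pass to the limit in the weak formulation. That Mazur step is not cosmetic---it replaces precisely the monotonicity argument you invoke, and your proof needs it (or an equivalent device) to be correct.
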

By the definition of $V^{i}$, and that $\Pi^{i}\big|_{\Omega \times \mathcal{N}} \in L^{2}_{\mathrm{cov}}$, (ii) follows immediately from (i). We refer to $\chi =  (\chi^{i})_{i = 1, \dots, d}$ as a \emph{corrector}, and $\Phi =  (\Phi^{i})_{i = 1, \dots, d}$ as \emph{harmonic coordinates}.
\begin{remark}
  If $L^{\omega}$ is symmetric, i.e.\ $c^{\omega} = c_{\mathrm{s}}^{\omega}$, then $\chi^{i}\big|_{\Omega\ \times \mathcal{N}}$, $\Phi^{i}\big|_{\Omega \times \mathcal{N}}$ are the unique orthogonal projections of $\Pi^{i}$ respectively on $L^{2}_{\mathrm{pot}}$, $L^{2}_{\mathrm{sol}}$, e.g.\ see \cite[Section 2.1]{ADS15}.
\end{remark}
For the proof of Lemma~\ref{lem:corrector_construction}, we need following estimates.
\begin{prop}[Weak sector condition and testable $\mathcal{H}_{-1}$ condition]
  \label{prop:weak_sec_H_-1}
  Under the same assumption of Lemma \ref{lem:corrector_construction}, 
  \begin{enumerate}[(i)]
  \item
    The domain the Dirichlet form given in \eqref{eq:def:dirichlet_form_annealed} can be extended to $L^{2}(\mu \mathbb{P}) \times L^{2}(\mu \mathbb{P})$, and satisfies a \emph{weak sector condition}: there exists a constant $C_{\mathrm{sec_w}} > 0$ such that
    \begin{align}
      \label{eqn:weak-sector}
      \abs{\mathcal{E}(\xi, \varphi)}^{2}
      \;\leq\;
      C_{\mathrm{sec_w}} \norm{\xi}^{2}_{L^2(\mu\mathbb{P})}\, \norm{\varphi}^{2}_{L^2(\mu\mathbb{P})},
      \qquad \forall\, \xi, \varphi \in L^{2}(\mu\mathbb{P});
    \end{align}
    
  \item
    for $i = 1, \dots, d$, and $V^{i}$, $\alpha$ given in \eqref{eq:def:local_drift}  and Assumption~\ref{ass:finite_H_-1_norm},
    \begin{align}
      \label{eqn:H_-1}   
      \abs{\mathbb{E}[\xi V^i(\cdot, 0)]}
      \;\leq\;
      \sqrt{2} \alpha\, \norm{D\xi}_{\mathrm{cov}},
      \qquad \forall\, \xi \in L^2(\mu\mathbb{P}).
    \end{align}
  \end{enumerate}
\end{prop}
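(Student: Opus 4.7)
The plan is to apply Cauchy--Schwarz directly to $\mathcal{E}(\xi,\varphi) = \mathbb{E}[\xi(-\mathcal{L}\varphi)]$, viewing it as a sum over single jumps $0 \to z$ weighted by $c^\omega(0,z)$. Splitting $c^\omega(0,z) = \sqrt{c^\omega(0,z)} \cdot \sqrt{c^\omega(0,z)}$ and applying Cauchy--Schwarz yields
\[
  |\mathcal{E}(\xi,\varphi)|^2
  \;\leq\;
  \mathbb{E}\Bigl[\sum_{z\in\mathcal{N}} c^\omega(0,z)\,\xi^2\Bigr]
  \cdot
  \mathbb{E}\Bigl[\sum_{z\in\mathcal{N}} c^\omega(0,z)\,(\varphi(\omega) - \varphi(\tau_z\omega))^2\Bigr].
\]
The first factor equals $\|\xi\|_{L^2(\mu\mathbb{P})}^2$ because $\sum_z c_{\mathrm{a}}^\omega(0,z) = 0$ forces $\sum_z c^\omega(0,z) = \mu^\omega(0)$. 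For the second, I would expand $(a-b)^2 \leq 2a^2 + 2b^2$: the $\varphi(\omega)^2$ contribution yields $2\|\varphi\|^2_{L^2(\mu\mathbb{P})}$, while for $\varphi(\tau_z\omega)^2$ I would combine stationarity, the identity $c^{\tau_{-z}\omega}(0,z) = c^\omega(-z,0)$ from \eqref{prop:shifted-rate}, the symmetry $-\mathcal{N} = \mathcal{N}$, and the doubly stochastic relation \eqref{eqn:prop:doubly_stochastic} to recognise $\sum_{z\in\mathcal{N}} c^\omega(-z,0) = \mu^\omega(0)$, yielding another $2\|\varphi\|^2_{L^2(\mu\mathbb{P})}$. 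This establishes \eqref{eqn:weak-sector} with $C_{\mathrm{sec_w}} = 4$ on bounded measurable $\xi,\varphi$; the extension of $\mathcal{E}$ to $L^2(\mu\mathbb{P})^2$ then follows by density and continuity.

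\textbf{Part (ii).} The key step is to unfold the cycle representation \eqref{eq:def:random_edge_weights} inside $V^i(\omega,0) = \sum_z c^\omega(0,z)\, z^i$. Writing a generic $\vec{\gamma}\in\Gamma_0$ as $(v_0 = 0, v_1, \ldots, v_n = 0)$, the edge condition $(0,z)\in x + \vec{\gamma}$ forces, for each edge index $j$, that $x = -v_{j-1}$ and $z = v_j - v_{j-1}$, so
\[
  V^i(\omega,0)
  \;=\;
  \sum_{\vec{\gamma}\in\Gamma_0}\sum_{j=1}^{|\vec{\gamma}|} \omega_{\vec{\gamma}}(-v_{j-1})\,(v_j - v_{j-1})^i.
\]
Integrating against $\xi$ and using stationarity to shift the base point of $\omega_{\vec{\gamma}}$ back to $0$ at the price of $\xi \mapsto \xi \circ \tau_{v_{j-1}}$, I would invoke the cycle-closure identity $\sum_{j=1}^{n}(v_j - v_{j-1})^i = 0$ to subtract $\xi(\omega)$ inside the expectation. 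This turns $\xi(\tau_{v_{j-1}}\omega) - \xi(\omega)$ into the telescope $\sum_{k=1}^{j-1} D_{v_k - v_{k-1}}\xi(\tau_{v_{k-1}}\omega)$, and swapping the $j$ and $k$ summations collapses the remaining $j$-sum to $v_n^i - v_k^i = -v_k^i$, producing
\[
  \mathbb{E}[\xi\, V^i(\cdot,0)]
  \;=\;
  -\sum_{\vec{\gamma}\in\Gamma_0} \sum_{k=1}^{|\vec{\gamma}|-1} v_k^i\,
  \mathbb{E}\bigl[\omega_{\vec{\gamma}}(0)\, D_{v_k - v_{k-1}}\xi(\tau_{v_{k-1}}\omega)\bigr].
\]
Applying Cauchy--Schwarz twice (inside each expectation, then across the double sum), with $\omega_{\vec{\gamma}}(0)$ split as $\sqrt{\omega_{\vec{\gamma}}(0)}\cdot\sqrt{\omega_{\vec{\gamma}}(0)}$, the $v_k^i$ factors aggregate to $\mathbb{E}[\sum_{\vec{\gamma}}\omega_{\vec{\gamma}}(0)\sum_{x\in\vec{\gamma}}(x^i)^2] \leq \alpha^2$ via Assumption~\ref{ass:finite_H_-1_norm}, while the gradient factors aggregate to $2\|D\xi\|^2_{\mathrm{cov}}$ by \eqref{eq:cov_norm_cycle}, giving \eqref{eqn:H_-1}.

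\textbf{Main obstacle.} The difficulty is concentrated in (ii): the pairing $\mathbb{E}[\xi V^i]$ must be controlled by the Dirichlet energy of $\xi$ rather than by $\|\xi\|_{L^2(\mu\mathbb{P})}$ (which would fail since $V^i$ need not belong to $L^2(\mu\mathbb{P})$), and one does not have a strong sector condition to fall back on. The only structural feature available is that $V^i$ has mean zero along each cycle; one must therefore cycle-decompose $V^i$, perform the summation-by-parts that trades the displacement $(v_j - v_{j-1})^i$ for the vertex coordinate $v_k^i$, and only then let Cauchy--Schwarz separate the "vertex" data from the "gradient" data. This is exactly why a second-moment bound on $\sum_{x\in\vec{\gamma}}(x^i)^2$ — the diameter-weighted mass of cycles — is the natural condition for the $\mathcal{H}_{-1}$ bound in the cyclic setting, and why the bounded cycle-length hypothesis of \cite{DK08} can be relaxed to the integrability in Assumption~\ref{ass:finite_H_-1_norm}.
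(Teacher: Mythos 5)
Your proposal is correct and takes essentially the same route as the paper: part (i) is the paper's Jensen/Cauchy--Schwarz bound $\|\mu(0)^{-1}\mathcal{L}\varphi\|^2_{L^2(\mu\mathbb{P})}\leq 4\|\varphi\|^2_{L^2(\mu\mathbb{P})}$ unpacked, and part (ii) performs exactly the paper's Abel summation along each cycle (your telescope of $\xi\circ\tau_{v_{j-1}}-\xi$ and subsequent swap of the $j,k$ sums is the explicit form of the paper's one-line summation-by-parts identity $\sum_{(x,y)\in\vec\gamma}b(x)(y^i-x^i)=\sum_{(x,y)\in\vec\gamma}(b(x)-b(y))y^i$), followed by the same split Cauchy--Schwarz against $\alpha$ and $\|D\xi\|_{\mathrm{cov}}$. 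Both routes yield the same constants $C_{\mathrm{sec_w}}=4$ and $\sqrt{2}\alpha$.
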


\begin{proof}
  (i) is clear. First notice that the set of measurable and bounded functions $\mathbb{B}$ is dense in $L^{2}(\mu \mathbb{P})$, and for every $\xi, \varphi \in \mathbb{B}\times \mathbb{B}$, we may write
  \begin{align*}
    \mathcal{E}(\xi, \varphi)
    \;=\;
    \mathbb{E}\bigl[\xi(-\mathcal{L}\varphi)\bigr]
    \;=\;
    \inp{\xi}{-\mu(0)^{-1} \mathcal{L}\varphi}_{L^2(\mu\mathbb{P})}.
  \end{align*}
  The RHS makes sense, since by Jensen's inequality and the stationarity of $\mathbb{P}$,
  \begin{align}
    \label{eq:prop:weak_sector}
    \norm{-\mu(0)^{-1}\mathcal{L}\varphi}^{2}_{L^{2}(\mu\mathbb{P})}
    \;\leq\;
    \mathbb{E}\Biggl[
      \sum_{z \in \mathcal{N}} c^{\omega}(0, z)\, (D_{z}\varphi(\omega))^{2}
    \Biggr]
    \;=\;
    2\, \norm{D\varphi}^{2}_{\mathrm{cov}}
    \;\leq\;
    4\, \norm{\varphi}^{2}_{L^{2}(\mu\mathbb{P})}.
  \end{align}
  Now we show (ii).  It follows from \eqref{eq:def:random_edge_weights}, for $\omega \in \Omega$,
  \begin{align*}
    V^i(\omega, 0)
    &\;=\;
    \sum_{\vec{\gamma} \in \Gamma_{0}} \sum_{z \in \mathcal{N}} \sum_{x \in \mathbb{Z}^{d}} \omega_{\vec{\gamma}}(x)\, \mathbbm{1}_{(0, z) \in x + \vec{\gamma}}\, z^{i}
    \\
    &\;=\;
    \sum_{\vec{\gamma} \in \Gamma_{0}} \sum_{z \in \mathcal{N}} \sum_{x \in \mathbb{Z}^{d}} \omega_{\vec{\gamma}}(x)\, \mathbbm{1}_{(-x, z-x) \in \vec{\gamma}}\,
    \bigl( (z-x)^{i} - (-x)^{i} \bigr)
    \\
    &\;=\;
    \sum_{\vec{\gamma} \in \Gamma_{0}} \sum_{(x, y) \in \vec{\gamma}} \omega_{\vec{\gamma}}(-x) \bigl(y^{i} - x^{i}\bigr).
  \end{align*}
  Hence, by stationarity, Cauchy-Schwarz inequality and \eqref{eq:cov_norm_cycle},
  \begin{align*}
    \abs{\mathbb{E}[\xi V^i(\cdot, 0)}
    &\;=\;
    \Biggl|
      \sum_{\vec{\gamma} \in \Gamma_{0}} \sum_{(x, y) \in \vec{\gamma}}
      \mathbb{E}\bigl[ \xi\, \omega_{\vec{\gamma}}(-x) \bigr] \bigl(y^{i} - x^{i}\bigr)
    \Biggr|
    \\
    &\;=\;
    \Biggl|
      \sum_{\vec{\gamma} \in \Gamma_{0}} \sum_{(x, y) \in \vec{\gamma}}
      \mathbb{E}\bigl[
        (\xi \circ \tau_{x} - \xi \circ \tau_{y})\, \omega_{\vec{\gamma}}(0)
      \bigr]\, x^{i}
    \Biggr|
    \\
    &\;\leq\;
    \mathbb{E}\Biggl[
      \sum_{\vec{\gamma} \in \Gamma_{0}} \omega_{\vec{\gamma}}(0)
      \sum_{x \in \vec{\gamma}}(x^{i})^{2}
    \Biggr]^{1/2}\,
    \mathbb{E}\Biggl[
      \sum_{\vec{\gamma} \in \Gamma_{0}} \sum_{(x, y) \in \vec{\gamma}} \omega_{\vec{\gamma}}(0)\, (\xi \circ \tau_y - \xi \circ \tau_x )^2
    \Biggr]^{1/2}
    \\
    &\;=\;
    \sqrt{2} \alpha\, \norm{D\xi}_{\mathrm{cov}}.
  \end{align*}
\end{proof}
\begin{remark} 
  \label{rem:subset_of_L^2pot}
  $\{D\varphi : \varphi \in L^{2}(\mu\mathbb{P})\} \subset L^{2}_{\mathrm{pot}}$. Indeed, since for every $\varphi \in L^{2}(\mu\mathbb{P})$, there exists a sequence of bounded functions $(\varphi_{n}) \subset \mathbb{B}$, such that $\norm{\varphi_{n} -\varphi}_{L^{2}(\mu\mathbb{P})} \xrightarrow[]{n \to \infty} 0$. It follows $\norm{D\varphi_{n} - D\varphi}_{\mathrm{cov}} \leq \sqrt{2} \norm{\varphi_{n} -\varphi}_{L^{2}(\mu\mathbb{P})} \xrightarrow[]{n \to \infty} 0$, hence $D\varphi \in \overline{D\mathbb{B}}^{\norm{\cdot}_\mathrm{cov}} = L^{2}_{\mathrm{pot}}$.
\end{remark}

\begin{remark}
  Due to the crude estimation in the final part of the proof, we did not expect $\sqrt{2}\alpha$ to be a sharp $\mathcal{H}_{-1}$ norm for $V^{i}(\cdot, 0)$.
\end{remark}

We are ready to show Lemma \ref{lem:corrector_construction}.
\begin{proof}[Proof of Lemma \ref{lem:corrector_construction}]
  It is sufficient to show (i). Further, considering Remark~\ref{rm:local_drift} and Proposition \ref{prop:cocycle_extension}-(i), it is enough to show \eqref{eq:corrector_construction} for $x = 0$.

  Fix $i \in \{1, \dots, d\}$, and for $\xi, \varphi \in L^{2}(\mu\mathbb{P}), \lambda \in (0, 1)$, let
  \begin{align*}
    B_{\lambda}(\xi, \varphi)
    &\;\ldef\;
    \mathcal{E}(\xi, \varphi) + \lambda\, \inp{\xi}{\varphi}_{L^{2}(\mu\mathbb{P})},
    \\
    Q^{i}(\xi)
    &\;\ldef\;
    -\mathbb{E}\bigl[\xi V^{i}(\cdot, 0)\bigr].
  \end{align*}
  It follows from \eqref{eq:def:dirichlet_form_annealed}, \eqref{eq:prop:dirichlet_norm}, \eqref{eq:prop:weak_sector}, and Proposition \ref{prop:weak_sec_H_-1}, that  $(B_{\lambda})_{\lambda \in (0, 1)}$ is a set of bounded, coercive bilinear form on $L^{2}(\mu\mathbb{P}) \times L^{2}(\mu\mathbb{P}) $ and that $Q^{i}$ is a bounded linear operator on $L^{2}(\mu\mathbb{P})$. Thus, by the Lax-Milgram theorem, for each $\lambda \in (0, 1)$, there exists $\varphi_{\lambda}^{i} \in L^{2}(\mu\mathbb{P})$, such that
  \begin{align}
    \label{eq:prop:weak_poisson}
    B_{\lambda}(\xi, \varphi^{i}_{\lambda}) \;=\; Q^{i}(\xi),
    \qquad \forall\, \xi \in L^{2}(\mu\mathbb{P}). 
  \end{align}
  Furthermore, due to Remark \ref{rem:subset_of_L^2pot} and
  \begin{align*}
    \norm{D\varphi^{i}_{\lambda}}^{2}_{\mathrm{cov}}
    + \lambda\, \norm{\varphi^{i}_{\lambda}}^{2}_{L^{2}(\mu\mathbb{P)}}
    \;\leq\;
    \sqrt{2}\alpha\, \norm{D\varphi^{i}_{\lambda}}_{\mathrm{cov}},
  \end{align*}
  $(\varphi^{i}_{\lambda})_{\lambda \in (0, 1)}$ is uniformly bounded in $L^{2}_{\mathrm{pot}}$, with
  \begin{align}
    \label{eqn:prop:norm_bounds}
    \norm{D\varphi^{i}_{\lambda}}_{\mathrm{cov}} \;\leq\; \sqrt{2}\alpha,
    \qquad \text{and} \qquad
    \lambda \norm{\varphi^{i}_{\lambda}}_{L^{2}(\mu\mathbb{P)}}
    \;\leq\;
    \sqrt{2 \lambda} \alpha.
  \end{align}
  Hence, there exists a sequence $\lambda_{n} \xrightarrow{n \to \infty}0$, such that $D\varphi_{\lambda_{n}}^{i}$ converges weakly to $\chi^{i}$ in $L^{2}_{\mathrm{pot}}$. By Mazur's lemma \cite[Lemma 10.1]{RR04}, there exists a function $N\colon \mathbb{N} \to \mathbb{N}$, and a sequence of sets of non-negative real numbers $(a_{n,k})_{n\leq k \leq N(n)}$, with $\sum_{k=n}^{N(n)} a_{n, k} = 1$, such that the sequence $(D\varphi^{i,n})$, with $\varphi^{i,n} = \sum_{k=n}^{N(n)} a_{n,k} \varphi^{i}_{\lambda_k}$ converges strongly to $\chi^{i}$ in $L^{2}_{\mathrm{pot}}$. In other words,
  \begin{align}
    \label{prop:strong_convergence_corrector}
    \norm{D\varphi^{i,n} - \chi^{i}}_{\mathrm{cov}} \;\xrightarrow{n\to \infty}\; 0.
  \end{align}
  Next, we show that $\mathbb{P}$-a.s.\ \eqref{eq:corrector_construction} holds. It suffices\footnote{
    To see this, write $\mathbb{E}\big[\xi \sum_{z \in \mathcal{N}} c^{(\cdot)}(0,z) \chi^{i}(\cdot, z) \big] = \inp{\xi}{\mu(0)^{-1} \sum_{z \in \mathcal{N}} c^{(\cdot)}(0,z) \chi^{i}(\cdot, z)}_{L^{2}(\mu\mathbb{P})}$, and $\mathbb{E}[\xi V^{i}(\cdot, 0)] = \inp{\xi}{\mu(0)^{-1} V^{i}(\cdot, 0)}_{L^{2}(\mu \mathbb{P})}$. They are justified by a similar argument to \eqref{eq:prop:weak_sector}.
  } to show
  \begin{align}
    \mathbb{E}\Biggl[
      \xi \sum_{z \in \mathcal{N}}c^{\omega}(0, z)\, \chi^{i}(\omega, z)
    \Biggr]
    \;=\;
    \mathbb{E}\bigl[ \xi V^{i}(\cdot, 0) \bigr],
    \qquad \forall\, \xi \in \mathbb{B}.
  \end{align}
  Notice that for each $\xi \in \mathbb{B}$,
  \begin{align}
    \label{eqn:prop:weak_poisson_strong_seq}
    \mathcal{E}(\xi, \varphi^{i, n})
    \;=\;
    \sum_{k=n}^{N(n)} a_{n,k}\, \mathcal{E}(\xi, \varphi^{i}_{\lambda_{n}})
    \;=\;
    Q^{i}(\xi) - \sum_{k=n}^{N(n)} a_{n,k} \lambda_{k} \inp{\xi}{\varphi^{i}_{\lambda_{k}}}_{L^{2}(\mu\mathbb{P})}.
  \end{align}
  By the definition \eqref{eq:def:dirichlet_form_annealed}, $\mathcal{E}(\xi, \varphi^{i,n}) = \mathbb{E}\bigl[ \xi(-\sum_{z \in \mathcal{N}} c^{\omega}(0, z) D\varphi^{i,n})\bigr]$. Moreover, by Cauchy-Schwarz's inequality,
  \begin{align*}
    \Biggl|
      \mathbb{E}\Biggl[
        \xi \biggl(-\sum_{z \in \mathcal{N}} c^{\omega}(0, z)\, D_{z}\varphi^{i,n} \biggr)
      \Biggr]
    \Biggr|
    \;\leq\;
    \norm{\xi}_{L^{\infty}}\,
    \mathbb{E}\bigl[\abs{\mathcal{L}\varphi^{i, n}}\bigr]
    \;\leq\;
    \sqrt{2}\, \norm{\xi}_{L^{\infty}}\,
    \mathbb{E}[\mu]^{1/2}\, \norm{D\varphi^{i, n}}_{\mathrm{cov}}.
  \end{align*}
  On the other hand, by \eqref{eqn:prop:norm_bounds},
  \begin{align*}
    \abs{\lambda_{k}\inp{\xi}{\varphi^{i}_{\lambda_k}}_{L^{2}(\mu\mathbb{P})}}
    \;\leq\;
    \norm{\xi}_{L^{\infty}}\, \lambda_{k} \norm{\varphi^{i}_{\lambda_{k}}}_{L^{2}(\mu\mathbb{P})}
    \;\leq\;
    \norm{\xi}_{L^{\infty}} \sqrt{2\lambda_{k}} \alpha, 
  \end{align*}
  and it follows that $\abs{\sum_{k=n}^{N(n)} a_{n,k} \lambda_{k} \inp{\xi}{\varphi^{i}_{\lambda_k}}_{L^{2}(\mu\mathbb{P})}} \leq \norm{\xi}_{L^{\infty}} \sqrt{2\lambda_{n}}\alpha$.

  Thus, as $n \to \infty$,  by the strong convergence \eqref{prop:strong_convergence_corrector}, the LHS of \eqref{eqn:prop:weak_poisson_strong_seq} converges to  $\mathbb{E}\bigl[\xi \sum_{z \in \mathcal{N}} c^{\omega}(0, z) \chi^{i}(\cdot, z)\bigr]$, with its LHS converging to $Q^{i}(\xi) = \mathbb{E}\bigl[\xi V^{i}(\cdot, 0)\bigr]$. At this point, we conclude Lemma \ref{lem:corrector_construction}.
\end{proof}

\subsection{Properties of the corrector and local martingale}
For $x\in \mathbb{Z}^d, r>0$, let $B(x_0, r):= \{x\in \mathbb{Z}^d: \norm{x-x_0}_{\infty} \leq r\}$, and $\partial B(x_0, r):=\{x\in \mathbb{Z}^d: \norm{x-x_0}_{\infty} = \floor{r}\}$, with $\norm{x}_{\infty}= \max_{i=1, \dots, d}\abs{x_i}.$  We write $B(n)\equiv B(0, n)$ for $n\in \mathbb{N}$. Now we introduce \textit{space-averaged norms} on function $u: B\to \mathbb{R}$, with $B$ being a non-empty set of $\mathbb{Z}^d$: for $p\in (0, \infty)$, define 
\begin{align*}
    \norm{u}_{p, B}:= \bigg(\frac{1}{\abs{B}} \sum_{x\in B} 
    \abs{u(x)}^p\bigg)^{1/p}, 
\end{align*}
and $\norm{u}_{\infty, B}:= \sup_{x\in B}\abs{u(x)}.$


A crucial step for the proof of Theorem \ref{thm:QFCLT} is to manage the corrector on a large scale, such that the maximal correction inside a box is negligible compared to the size of the box. This property is referred to as the \textit{sublinearity of the corrector}. Two results follows: (1) the quenched invariance principle holds for the local martingale process $\Phi(\omega, X_t)$; (2) $\chi(\omega, X_t)$ vanishes $\mathbb{P}\times \Prob_0^\omega$-almost surely under the diffusive scaling. 

We now formalise the above results: 

\begin{lemma}[Corrector sublinearity and local Martingale QFCLT] \label{lem:sublinearCorrector_martingaleFCLT} Suppose $d\geq 2$, Assumption ~\ref{ass:P}-(i) and Assumption ~\ref{ass:p-q-moment} hold. Let $\chi, \Phi$ be as constructed in Lemma \ref{lem:corrector_construction}. Then 
\begin{enumerate}[(i)]
    \item $\chi$ is sublinear on a large scale, i.e. for $\mathbb{P}$-a.e. $\omega$,  and all $i=1, \dots, d,$
    \begin{align}
\label{eqn:sublinearity}
    \lim_{n\to\infty}\frac{1}{n}\norm{\chi^i(\omega, \cdot)}_{\infty, B(n)}=0; 
    \end{align}
    \item  $M:= (\Phi(\cdot, X_t))_{t\geq 0}$ satisfies the quenched invariance principle, i.e. QFCLT holds for $M$ with a deterministic nondegenerate covariance matrix $\Sigma^2$ given by 
\begin{align}
\label{def:covariance}
    \Sigma^2_{i,j}=\mathbb{E}\Big[\sum_{z\in \mathcal{N}} c_{\mathrm{s}}^{\omega}(0, z)\Phi^i(\omega, z) \Phi^j(\omega, z) \Big],
\end{align}
for $1\leq i, j \leq d;$
\item $\chi$ vanishes along random walks under the diffusive scaling, i.e., 
for $\mathbb{P}$-a.e. $\omega$, and all $i=1, \dots, d$, $T>0$, 
\begin{align}
\label{eqn:vanishing-corrector}
    \lim_{n\to \infty}\sup_{t\in [0, T]} \frac{1}{n}\abs{\chi^i(\omega, X_{n^2t})}=0 \qquad \text{in } \Prob_0^\omega.
\end{align}
\end{enumerate}
\end{lemma}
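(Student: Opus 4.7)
The plan is to treat (i), (ii), (iii) in order: (i) is the main analytical input, (ii) then follows from a martingale CLT applied to $M^i_t \ldef \Phi^i(\omega, X_t)$, and (iii) is a soft consequence of the first two.

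For (ii), I would first observe that, by Lemma \ref{lem:corrector_construction}, $\Phi^i$ is $L^\omega$-harmonic and satisfies the cocycle property, so Dynkin's formula yields that $M^i$ is a local martingale under $\Prob_0^\omega$ with predictable quadratic variation
\begin{align*}
    \langle M^i, M^j\rangle_t
    \;=\;
    \int_0^t \sum_{z \in \mathcal{N}} c^\omega(X_s, X_s + z)\, \Phi^i(\tau_{X_s}\omega, z)\, \Phi^j(\tau_{X_s}\omega, z)\, \md s.
\end{align*}
Using the cocycle property to symmetrise the integrand (replacing $c^\omega$ by $c_{\mathrm{s}}^\omega$), then invoking Birkhoff's ergodic theorem along the stationary, ergodic environment process $(\tau_{X_s}\omega)_{s \geq 0}$ from Proposition \ref{prop:time-ergodicity-and-non-explosion}, I deduce $n^{-2}\langle M^i, M^j\rangle_{n^2 t} \to t\, \Sigma_{ij}^2$ for $\mathbb{P}\otimes \Prob_0^\omega$-a.e.\ trajectory. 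Lindeberg's condition follows from $\Phi^i\restrict{\Omega \times \mathcal{N}} \in L^2_{\mathrm{cov}}$ by dominated convergence, so the multidimensional martingale CLT delivers the QFCLT for $M^{(n)}_t \ldef M_{n^2 t}/n$. Non-degeneracy of $\Sigma^2$ is obtained variationally: if $v \cdot \Sigma^2 v = 0$ for some $v \neq 0$, then $v \cdot \Phi\restrict{\Omega \times \mathcal{N}}$ vanishes $c_{\mathrm{s}}^\omega$-a.e., forcing $v \cdot \Pi \restrict{\Omega \times \mathcal{N}} = v \cdot \chi\restrict{\Omega \times \mathcal{N}} \in L^2_{\mathrm{pot}}$, contradicting the fact that the linear field $v \cdot \Pi$ is unbounded on $\mathbb{Z}^d$.

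The main obstacle is (i), which I would attack by the two-step De Giorgi strategy sketched in the introduction. The first step is $\ell^1$-sublinearity: since $\chi^i\restrict{\Omega \times \mathcal{N}}$ is a limit in $L^2_{\mathrm{cov}}$ of gradients $D\varphi$ of bounded $\varphi$ with $\mathbb{E}[\varphi] = 0$, a standard application of the multidimensional ergodic theorem to sums of cocycle increments along paths in $B(n)$ yields
\begin{align*}
    \lim_{n \to \infty}\frac{1}{n^{d+1}} \sum_{x \in B(n)} \abs{\chi^i(\omega, x)}
    \;=\; 0
    \qquad \mathbb{P}\text{-a.s.,}
\end{align*}
in analogy with \cite{ADS15}. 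The harder second step upgrades this to $\ell^\infty$-sublinearity via a De Giorgi iteration applied to truncations of $\chi^i$ on the balls $B(n)$. The decisive new input, compared with the symmetric case, is an energy estimate for the non-symmetric operator $L^\omega$: using the identity $L^\omega \chi^i = V^i$ and the $\mathcal{H}_{-1}$-bound on $V^i$ from Proposition \ref{prop:weak_sec_H_-1}-(ii), together with a cycle-wise expansion of the anti-symmetric part of the Dirichlet form, the non-symmetric contributions are absorbed into the symmetric Dirichlet energy plus a lower-order term controlled by $\mu^{(2), \omega}$. Combined with the discrete Sobolev inequality, the weighted Poincar\'e inequality (both depending only on $c_{\mathrm{s}}^\omega$ and thus essentially as in \cite{ADS15}), H\"older's inequality with exponents saturating $1/p + 1/q < 2/d$, and ergodic control of the space-averaged moments of $\mu^{(2), \omega}(0)^p$ and $1/c_{\mathrm{s}}^\omega(0, e)^q$, the iteration converges and yields $\norm{\chi^i(\omega, \cdot)}_{\infty, B(n)}/n \to 0$. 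Producing the non-symmetric energy estimate cleanly, and tracking precisely how the cycle-length moment enters the constants, is where I expect the most careful bookkeeping.

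Finally, (iii) is a soft consequence. Tightness of $M^{(n)}$ from (ii), together with the sublinearity from (i) and the fact that $X$ performs nearest-neighbour jumps only, gives through a short bootstrap argument (stopping $X_{n^2 \cdot}$ at the first exit from $B(Rn)$ and estimating $|M_{n^2 \tau}|$ from below by $Rn - \norm{\chi}_{\infty, B(Rn+1)}$) that $\sup_{t \leq T} \abs{X_{n^2 t}}/n$ is tight under $\Prob_0^\omega$ for $\mathbb{P}$-a.e.\ $\omega$. For any $\varepsilon > 0$, choosing $R$ such that $\Prob_0^\omega[\sup_{t \leq T} \abs{X_{n^2 t}} > Rn] < \varepsilon$ for all large $n$, and estimating
\begin{align*}
    \Prob_0^\omega\Bigl[ \sup_{t \leq T}\frac{\abs{\chi^i(\omega, X_{n^2 t})}}{n} > \varepsilon \Bigr]
    \;\leq\;
    \indicator\bigl\{ \norm{\chi^i(\omega, \cdot)}_{\infty, B(Rn)} > \varepsilon n \bigr\}
    \;+\; \varepsilon,
\end{align*}
the indicator on the right vanishes as $n \to \infty$ by (i), and since $\varepsilon$ is arbitrary, the claim follows.
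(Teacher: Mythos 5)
Your overall outline matches the paper's strategy in broad strokes, but part (i) contains a genuine gap in the route you propose, and your non-degeneracy argument in (ii) is incomplete.

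On (i): you propose to run De Giorgi iteration directly on truncations of $\chi^i$, treating $L^\omega \chi^i = V^i$ as a Poisson problem and controlling the right-hand side through the $\mathcal{H}_{-1}$-bound of Proposition~\ref{prop:weak_sec_H_-1}-(ii). This does not work as stated. The $\mathcal{H}_{-1}$-bound on $V^i$ is an \emph{annealed} bound (an $\mathbb{E}$-inequality over $\Omega$); what the De Giorgi iteration on a fixed box $B(n)$ needs is a quenched, spatially local control of the forcing term, and the paper's energy estimate (Proposition~\ref{prop:energy-est}) is proved only for $L^\omega$-sub/harmonic functions, i.e.\ without any source term. The paper's way around this is essential, not cosmetic: it applies the maximal inequality of Proposition~\ref{prop:maximum-inequality} to the \emph{harmonic} function $\Phi^i = \Pi^i - \chi^i$ and uses a two-scale subdivision of $B(n)$ (following \cite{BS20}), writing on each sub-box $|\chi^i| \leq |z^i\lfloor n/m\rfloor - \Phi^i| + \lfloor n/m\rfloor$; the affine correction $z^i\lfloor n/m\rfloor$ leaves $L^\omega$-harmonicity intact, so the energy estimate applies with no $V^i$ to contend with. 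Relatedly, plain $\ell^1$-sublinearity is not a sufficient input: the maximal inequality bounds $\norm{u}_{\infty}$ by $\norm{u}_{2p_*}$, and the paper supplies the needed higher-moment control via the $(\ell^{2\rho}, B)$-sublinearity of Proposition~\ref{prop:space-averaged-sublinearity} ($\rho > p_*$), which itself is obtained from the local Poincar\'e inequality together with a dyadic telescoping of box averages — not merely from the cocycle ergodic theorem.

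On (ii): the martingale CLT step is essentially the same as the paper (Helland's theorem, convergence of the compensated quadratic variation via ergodicity, Lindeberg from $\Phi^i\restrict{\Omega\times\mathcal{N}}\in L^2_{\mathrm{cov}}$). But your non-degeneracy argument is not a proof: $L^2_{\mathrm{pot}}$ contains plenty of cocycles whose $\mathbb{Z}^d$-extensions are unbounded, so ``$v\cdot\Pi$ is unbounded on $\mathbb{Z}^d$'' does not, by itself, preclude $v\cdot\Pi\restrict{\Omega\times\mathcal{N}}\in L^2_{\mathrm{pot}}$. The paper instead uses the $\ell^1$-sublinearity: if $v\cdot\Pi = v\cdot\chi$, then $\frac{1}{n|B(n)|}\sum_{x\in B(n)}|v\cdot\Pi(\omega,x)|\to 0$, contradicting the Riemann-sum convergence to $\fint_{B_{\mathbb{R}^d}(1)}|v\cdot y|\,\md y > 0$.

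Part (iii) is fine; your stopping argument is a valid variant of the standard Doob's-inequality route cited in the paper.
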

\begin{remark}
    Lemma \ref{lem:sublinearCorrector_martingaleFCLT}-(ii) follows from the construction of $\chi$ and $\Phi$, as well as the space-averaged $\ell^1$-sublinearity of $\chi$. Thus, it holds under Assumption ~\ref{ass:P}-(i) and Assumption ~\ref{ass:p-q-moment} with $p=q=1$. 
\end{remark}

We divide the proof in  three parts.

\subsubsection{Sublinearity of the corrector}

To show Lemma \ref{lem:sublinearCorrector_martingaleFCLT}-(i), we need to control the maximum of (locally) harmonic function, in particular $\Phi^i$, by its space-averaged norm, which will be governed by the space-averaged sublinearity of $\chi^i$, with an error term that is at most linear. 
\begin{prop}[Maximal inequality for harmonic functions] 
\label{prop:maximum-inequality}
Fix $\omega \in \Omega$. Let $u: \mathbb{Z}^d \to \mathbb{R}$ be either non-negative $L^\omega$-subharmonic $(-L^\omega u \leq 0)$,  or $L^\omega$-harmonic $(L^\omega u = 0)$ on $B(x, n)$, for some $x\in \mathbb{Z}^d$. Then for $p, q\in (1, \infty]$ with $1/p+1/q <2/d$, there exist constants $\kappa = \kappa(d, p, q) \in (1/2, \infty)$ and $C_{\mathrm{Max}}= C_{\mathrm{Max}}(d, p, q)$ such that for all $1/2 \leq \sigma' < \sigma \leq 1$,
\begin{align}
\label{eqn:max-ineq}
    \norm{u}_{\infty, B(x, \sigma'n)} \leq C_{\mathrm{Max}}\bigg( \frac{ \norm{\mu^{(2), \omega}}_{p, B(x, \sigma n)}\norm{\nu^\omega}_{q, B(x, \sigma n)}}{(\sigma - \sigma')^2}\bigg)^\kappa \norm{u}_{2p_*, B(x, \sigma n)},
\end{align}
where $p_*=p/(p-1)$ is the Hölder conjugate of $p$.
\end{prop}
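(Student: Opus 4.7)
The plan is to carry out a De~Giorgi iteration tailored to the non-symmetric cyclic generator $L^{\omega}$. Fix $x$, $n$, and radii $1/2 \le \sigma' < \sigma \le 1$. For a truncation level $k \ge 0$ I set $v_{k} = (u-k)_{+}$, and for auxiliary radii $\sigma' \le \rho < \rho' \le \sigma$ introduce a $[0,1]$-valued cut-off $\eta$ supported in $B(x, \rho' n)$, equal to $1$ on $B(x, \rho n)$, with discrete gradient of order $1/((\rho'-\rho)n)$. The goal is to derive a self-improving bound for the super-level sets $A(k,r) \ldef \{y \in B(x,r) : u(y)>k\}$ that can be iterated in both $k$ and $r$.

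The first and most delicate step is a Caccioppoli-type energy estimate of the schematic form
\begin{align*}
\sum_{x',y'} c_{\mathrm{s}}^{\omega}(x',y')\,\bigl(v_{k}(x')\eta(x')-v_{k}(y')\eta(y')\bigr)^{2}
\;\le\;
C\sum_{x',y'}\bigl(c_{\mathrm{s}}^{\omega}(x',y')+R^{\omega}(x',y')\bigr)\,v_{k}(x')^{2}\,\bigl(\eta(x')-\eta(y')\bigr)^{2},
\end{align*}
obtained by testing the subharmonicity of $u$ against $v_{k}\eta^{2}$ and summing by parts. The symmetric half is handled as in the conductance-model argument of \cite{ADS15}. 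For the antisymmetric half I would unfold the cycle decomposition \eqref{eq:def:random_edge_weights}: each cycle $z+\vec{\gamma}$ contributes a telescoping sum over its $\abs{\vec{\gamma}}$ oriented edges, which after a Cauchy--Schwarz across the cycle is converted into a term of symmetric type at the cost of a prefactor $\omega_{\vec{\gamma}}(z)\,\abs{\vec{\gamma}}^{2}$. Summing in $(z,\vec{\gamma})$ yields the pointwise bound $R^{\omega}(x',y') \le C\,(\mu^{(2),\omega}(x')+\mu^{(2),\omega}(y'))$. This is precisely the point at which the cycle-length moment of Assumption~\ref{ass:p-q-moment}-(i) enters, replacing the classical sector condition that fails for unbounded cycle length.

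The second step combines the Caccioppoli inequality with a Sobolev inequality on $\mathbb{Z}^{d}$, applied to $v_{k}\eta$, upgrading the $\ell^{2}$-energy to the $\ell^{2d/(d-2)}$-norm of $v_{k}$ on $B(x,\rho n)$. Two Hölder applications follow: the first with exponent $p$ extracts $\norm{\mu^{(2),\omega}}_{p,B(x,\sigma n)}$ from the $R^{\omega}$-weighted term, while the second with exponent $q$ introduces $\norm{\nu^{\omega}}_{q,B(x,\sigma n)}$ by multiplying and dividing by $c_{\mathrm{s}}^{\omega}$ on each edge. The constraint $1/p+1/q<2/d$ ensures that the resulting exponent on $\abs{A(k',\rho n)}$ is strictly larger than $1$. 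After a standard Chebyshev step separating the levels $k' > k$, this produces an estimate of the form
\begin{align*}
(k'-k)^{2}\,\abs{A(k',\rho n)}^{1+\delta}
\;\le\;
\frac{C\,\Theta^{\omega}}{((\rho'-\rho)n)^{2}}\,
\Bigl(\sum_{y \in A(k,\rho' n)} v_{k}(y)^{2p_{*}}\Bigr)^{(1+\delta)/p_{*}},
\end{align*}
with $\Theta^{\omega}=\norm{\mu^{(2),\omega}}_{p,B(x,\sigma n)}\norm{\nu^{\omega}}_{q,B(x,\sigma n)}$ and $\delta=\delta(d,p,q)>0$ built from the Sobolev gain minus the Hölder losses.

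Finally, I would run the usual De~Giorgi iteration with levels $k_{j}=k_{\infty}(1-2^{-j})$ and radii $\rho_{j}=\sigma'+(\sigma-\sigma')2^{-j}$, turning the two-parameter inequality into a nonlinear recursion $\Psi_{j+1}\le C\,4^{j}\,\Theta^{\omega}\,\Psi_{j}^{1+\delta}$ for $\Psi_{j}=\abs{B(x,\sigma n)}^{-1}\sum_{y \in A(k_{j},\rho_{j}n)}v_{k_{j}}(y)^{2p_{*}}$. Choosing $k_{\infty}$ equal to a sufficiently large multiple of the right-hand side of \eqref{eqn:max-ineq} makes $\Psi_{0}$ sub-critical for the recursion, so the standard nonlinear-iteration lemma forces $\Psi_{j}\to 0$, whence $u\le k_{\infty}$ on $B(x,\sigma' n)$. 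The exponent $\kappa$ in \eqref{eqn:max-ineq} is read off from the bookkeeping of how the prefactor $\Theta^{\omega}/(\sigma-\sigma')^{2}$ is amplified along the iteration. The main obstacle I anticipate is keeping the Caccioppoli step sharp enough: one has to close the antisymmetric contribution back into the symmetric energy using only $\mu^{(2),\omega}$, which requires exploiting both the cycle structure and the cancellation $\sum_{y}c_{\mathrm{a}}^{\omega}(x',y)=0$ simultaneously inside each cycle, so that no uncontrolled drift-type term survives.
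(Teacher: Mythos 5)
Your proposal follows essentially the same route as the paper: a cycle-decomposed Caccioppoli/energy estimate in which Cauchy--Schwarz along each cycle costs a factor $\abs{\vec{\gamma}}^{2}$ and produces $\mu^{(2),\omega}$ (this is Proposition \ref{prop:energy-est}), followed by the $\nu^{\omega}$-weighted Sobolev inequality, H\"older in $p$ and $q$, and a De Giorgi iteration in levels and radii (Lemma \ref{prop:iteration}). The only cosmetic differences are that the paper handles the full cycle form at once (rather than splitting off the antisymmetric half, working with $(\abs{u}-l)_{+}$ so the harmonic case is covered too) and records the energy estimate with the global bound $\norm{\nabla\eta}^{2}_{\ell^{\infty}}$ instead of your edge-local weight $R^{\omega}$ --- which is all that is needed, since the cutoffs have uniform gradient of order $1/((\sigma-\sigma')n)$.
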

We defer the proof to Section \ref{sec:max_ineq}. 

 \begin{prop}[$(\ell^{2\rho}, B)$-sublinearity] 
\label{prop:space-averaged-sublinearity}
    Let $\rho = \rho(d, q) := d/(d-2+d/q)$. It holds that for $\mathbb{P}$-a.e. $\omega$, and all $i=1, \dots, d$, 
\begin{align}
\label{eqn:space-averaged-sublinearity}
    \lim_{n\to\infty}\frac{1}{n}\norm{\chi^i(\omega, \cdot)}_{2\rho, B(n)}=0. 
\end{align}
\end{prop}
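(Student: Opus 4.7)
The plan is to combine an approximation/cocycle argument with a weighted Sobolev--Poincar\'{e} inequality involving only the symmetric data $c_{\mathrm{s}}^{\omega}$, plus the multi-dimensional Birkhoff ergodic theorem. Fix $\varepsilon > 0$. Since $\chi^{i}|_{\Omega \times \mathcal{N}} \in L^{2}_{\mathrm{pot}} = \overline{D\mathbb{B}}^{\Norm{\cdot}{\mathrm{cov}}}$, choose $\varphi = \varphi_{\varepsilon} \in \mathbb{B}$ so that $R \ldef \chi^{i}|_{\Omega \times \mathcal{N}} - D\varphi$ satisfies $\Norm{R}{\mathrm{cov}} < \varepsilon$, and extend $R$ to a cocycle $R^{\mathrm{ext}} \colon \Omega \times \mathbb{Z}^{d} \to \mathbb{R}$ via Proposition~\ref{prop:cocycle_extension}-(i). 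Since the cocycle extension of $D\varphi$ is the bounded function $(\omega, x) \mapsto \varphi(\tau_{x}\omega) - \varphi(\omega)$, we have the decomposition
\begin{align*}
  \chi^{i}(\omega, x)
  \;=\;
  \bigl( \varphi(\tau_{x}\omega) - \varphi(\omega) \bigr) + R^{\mathrm{ext}}(\omega, x),
\end{align*}
and $\frac{1}{n}\Norm{\varphi(\tau_{\cdot}\omega) - \varphi(\omega)}{2\rho, B(n)} \leq 2\Norm{\varphi}{\infty}/n \to 0$. It therefore suffices to show $\limsup_{n} \frac{1}{n}\Norm{R^{\mathrm{ext}}(\omega, \cdot)}{2\rho, B(n)} \leq C \varepsilon$ for $\prob$-a.e.\ $\omega$, then let $\varepsilon \downarrow 0$.

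I split $\Norm{R^{\mathrm{ext}}}{2\rho, B(n)} \leq \Norm{R^{\mathrm{ext}} - (R^{\mathrm{ext}})_{B(n)}}{2\rho, B(n)} + |(R^{\mathrm{ext}})_{B(n)}|$ and handle the oscillation term using a weighted Sobolev--Poincar\'{e} inequality with Sobolev conjugate $2\rho$ --- whose proof is inherited from \cite[Proposition~4.1]{ADS15} since only $c_{\mathrm{s}}^{\omega}$ enters --- of the form
\begin{align*}
  \Norm{u - (u)_{B(n)}}{2\rho, B(n)}
  \;\leq\;
  C_{\mathrm{SP}}\, n\, \Norm{\nu^{\omega}}{q, B(n)}^{1/2}\,
  \biggl(
    \frac{1}{|B(n)|} \sum_{x \in B(n)}\sum_{z \in \mathcal{N}}
    c_{\mathrm{s}}^{\omega}(x, x+z)\, \bigl(u(x+z) - u(x)\bigr)^{2}
  \biggr)^{1/2}.
\end{align*}
Applied to $u = R^{\mathrm{ext}}(\omega, \cdot)$, the cocycle identity $R^{\mathrm{ext}}(\omega, x+z) - R^{\mathrm{ext}}(\omega, x) = R(\tau_{x}\omega, z)$ rewrites the energy as $\mathcal{E}_{n}(R) \ldef \frac{1}{|B(n)|}\sum_{x \in B(n)}\sum_{z \in \mathcal{N}} c_{\mathrm{s}}^{\tau_{x}\omega}(0, z)\, R(\tau_{x}\omega, z)^{2}$. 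The $d$-dimensional ergodic theorem (Assumption~\ref{ass:P}-(i)) together with Assumption~\ref{ass:p-q-moment}-(ii) yield, $\prob$-a.s., $\Norm{\nu^{\omega}}{q, B(n)} \to \mean\bigl[\nu^{\omega}(0)^{q}\bigr]^{1/q} < \infty$ and $\mathcal{E}_{n}(R) \to 2\, \Norm{R}{\mathrm{cov}}^{2} < 2 \varepsilon^{2}$, so $\limsup_{n} \frac{1}{n}\Norm{R^{\mathrm{ext}} - (R^{\mathrm{ext}})_{B(n)}}{2\rho, B(n)} \leq C_{1}\, \varepsilon$.

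The remaining step -- and the main technical hurdle -- is to control the spatial mean $|(R^{\mathrm{ext}})_{B(n)}|/n$, for which Sobolev--Poincar\'{e} is silent. Here I bound $|(R^{\mathrm{ext}})_{B(n)}| \leq \Norm{R^{\mathrm{ext}}(\omega, \cdot)}{1, B(n)}$ by Jensen's inequality and establish an $\ell^{1}$-sublinearity bound. Writing $R^{\mathrm{ext}}(\omega, x) = \sum_{j=0}^{\abs{x}_{1}-1} R(\tau_{x_{j}}\omega, x_{j+1} - x_{j})$ along a monotone coordinate path $0 = x_{0}, x_{1}, \ldots, x_{\abs{x}_{1}} = x$, swapping the order of summation and applying Birkhoff's theorem to each $L^{1}(\prob)$-function $\omega \mapsto |R(\omega, e)|$, $e \in \mathcal{N}$, gives the desired control. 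Integrability $R(\cdot, e) \in L^{1}(\prob)$ follows from Cauchy--Schwarz, $\mean[|R(\cdot, e)|] \leq \Norm{R}{\mathrm{cov}}\, \mean[1/c_{\mathrm{s}}^{\omega}(0, e)]^{1/2} < \infty$, by Assumption~\ref{ass:p-q-moment}-(ii). A path-counting argument on the axis-ordered lattice paths then yields $\limsup_{n} \frac{1}{n}\Norm{R^{\mathrm{ext}}(\omega, \cdot)}{1, B(n)} \leq C_{2} \sum_{e \in \mathcal{N}} \mean[|R(\cdot, e)|] \leq C_{3}\, \Norm{R}{\mathrm{cov}} < C_{3}\, \varepsilon$. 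Assembling this with the oscillation bound gives $\limsup_{n} \frac{1}{n}\Norm{\chi^{i}(\omega, \cdot)}{2\rho, B(n)} \leq (C_{1} + C_{3})\, \varepsilon$, and letting $\varepsilon \downarrow 0$ concludes the proof.
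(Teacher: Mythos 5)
Your high-level structure --- approximate $\chi^{i}$ by $D\varphi$ so that the remainder $R$ has small $\norm{\cdot}_{\mathrm{cov}}$-norm, absorb the bounded $D\varphi$-part trivially, and split the cocycle remainder into a Poincar\'{e}-controlled oscillation piece plus a spatial-mean piece --- is the same as the paper's, and your treatment of the oscillation term via the weighted local Poincar\'{e} inequality (Proposition~\ref{prop:weighted-graph}-(ii)) combined with the spatial ergodic theorem matches the paper exactly. Where you diverge is the control of the mean $\abs{(R^{\mathrm{ext}})_{B(n)}}$: the paper uses a dyadic telescoping scheme (after \cite[Section 3.4]{FO17}), comparing the averages over $B(2^{l-1}n_0)$ and $B(2^{l}n_0)$ through the $\ell^1$-oscillation on the larger ball, which is already controlled; you instead attempt a direct path-counting argument along axis-ordered $L$-paths together with Birkhoff's theorem.

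That path-counting step has a genuine gap. With a fixed axis ordering, the axis-$i$ portion of the $L$-path from $0$ to $x$ traverses vertices of the form $(x^1, \dots, x^{i-1}, t, 0, \dots, 0)$, which for $i < d$ lie in the proper sub-lattice $\mathbb{Z}^{i} \times \{0\}^{d-i}$. After summing over $x \in B(n)$ and normalizing, the contribution of direction $i$ reduces to an average of $\abs{R(\tau_{y}\omega, \pm e_{i})}$ over an $i$-dimensional box. Birkhoff's theorem applied to the $\mathbb{Z}^{i}$-subaction then yields the \emph{conditional} expectation $\mean\bigl[\abs{R(\cdot, e_{i})}\mid \mathcal{I}_{i}\bigr]$ with respect to the $\sigma$-algebra $\mathcal{I}_{i}$ of $\mathbb{Z}^{i}$-invariant events, not the full expectation $\mean\bigl[\abs{R(\cdot, e_{i})}\bigr]$. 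Ergodicity of the full $\mathbb{Z}^{d}$-action in Assumption~\ref{ass:P}-(i) does not imply ergodicity of lower-dimensional subactions (for instance, a law in which $\omega$ depends only on the coordinate $x_{2}$ of the site can be $\mathbb{Z}^{d}$-ergodic while the $e_{1}$-shift acts trivially), so the claimed $\prob$-a.s.\ bound $\limsup_{n} \frac{1}{n}\norm{R^{\mathrm{ext}}(\omega, \cdot)}_{1, B(n)} \leq C_{3} \norm{R}_{\mathrm{cov}}$ does not follow. One could try to repair this via a Borel--Cantelli argument along a sequence $\varepsilon_{k} \downarrow 0$, using $\mean\bigl[\mean[\abs{R_{\varepsilon_{k}}}\mid \mathcal{I}_{i}]\bigr] \leq C\varepsilon_{k}$, but that is a nontrivial extra step you do not supply. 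The paper's dyadic telescoping sidesteps the issue entirely and is the cleaner route here.
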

\begin{remark} Since $2\rho >1$, $(\ell^1, B)$-sublinearity of the corrector is an immediate consequence of Proposition \ref{prop:space-averaged-sublinearity}.
\end{remark}

\begin{proof}[Proof of Lemma \ref{lem:sublinearCorrector_martingaleFCLT}-(i)] It can be easily deduced from Proposition \ref{prop:space-averaged-sublinearity} and Proposition \ref{prop:maximum-inequality}, by taking a two-scale argument used in \cite[Proposition 2]{BS20}. The two-scale method is commonly used to transform the problem of controlling the solution to a Poisson equation into one of controlling the solution to a Laplace equation. In this setting, the procedure goes as follows: 1) first subdivide the box $B(n)$ into smaller boxes $B(\floor{\frac{n}{m}}z,\floor{\frac{n}{m}})$, $z\in B(m)$, for $m\in \mathbb{N}$ satisfying $n\geq m(m+1)$ (hence the term \textit{two-scale}); 2) on each sub-box, we have $\abs{\chi^i} \leq \abs{z^i\floor{\frac{n}{m}}-\Phi^i} + \floor{\frac{n}{m}}$, where the first term on the RHS is controlled by the $\norm{z^i\floor{\frac{n}{m}}-\Phi^i}_{2p_*, B(\floor{\frac{n}{m}}z,2\floor{\frac{n}{m}})}\leq \norm{\chi^i}_{2p_*, B(\floor{\frac{n}{m}}z,2\floor{\frac{n}{m}})} + 2\floor{\frac{n}{m}}$ , which is of order $O(m^d)\norm{\chi^i(\omega, \cdot)}_{2p_*, B(3n)}+ O(\frac{1}{m})$ with a prefactor provided in \eqref{eqn:max-ineq}. The prefactor is bounded under the spatial ergodic theorem, and the p-q moment condition in Assumption \ref{ass:p-q-moment}. In view of the $(\ell^{2\rho}, B)$-sublinearity, $\rho>p_*$, and the free choice of $m$, it yields the $(\ell^{\infty}, B)$-sublinearity. 
\end{proof}
\begin{proof}[Proof of Proposition \ref{prop:space-averaged-sublinearity}]
By Lemma \ref{lem:corrector_construction}, $\chi^i\in L^2_{\mathrm{pot}}$, there exists a sequence $(\phi^i_k)_{k\in \mathbb{N}} \subset \mathbb{B}$ such that 
\begin{align}
\label{eq:corrector_conv_seq}
    \lim_{k\to \infty} \norm{\chi^i-D\phi_k^i}_{\mathrm{cov}}=0.
\end{align}

    Write  $(f)_{B(n)}= \frac{1}{\abs{B(n)}}\sum_{x\in B(n)}f(x)$. Fix $i$, and let $f_k(x) = \chi^i(\omega, x) - \phi^i_k(\tau_x \omega)$. 
    First, by the local Poincaré inequality (Proposition \ref{prop:weighted-graph}-(ii)) and the spatial ergodic theorem (cf. \cite[Theorem 2.8 in Chapter 6]{Kre85}), 
    \begin{align*}
         \uplim_{k\to \infty}\uplim_{n\to \infty}\frac{1}{n^2}\norm{f_k - (f_k)_{B(n)}}_{2\rho, B(n)}^2 
         & \leq \uplim_{k\to \infty}\uplim_{n\to \infty} C_{\mathrm{LP}} \norm{\nu^\omega}_{q, B(n)} \frac{1} {\abs{B(n)}}\mathcal{E}^\omega_{B(n)} (f_k)\\ 
         & \leq \lim_{k\to \infty} C_{\mathrm{LP}} \norm{\nu^\omega}_{L^q(\mathbb{P})} \norm{\chi^i-D\phi_k^i}_{\mathrm{cov}}^2 \\ &  \stackrel{\eqref{eq:corrector_conv_seq}}{=}0. 
    \end{align*}
    It follows that, for arbitrary $\delta>0$, there exist $k, n_0 \in \mathbb{N}$, such that for all $n\geq n_0$, \eqref{eq:reg:large_balls} holds,  and 
   $\frac{1}{n}\norm{f_k - (f_k)_{B(n)}}_{2\rho, B(n)}<\delta$. Next, we apply a dyadic iteration scheme similar to \cite[Section 3.4]{FO17}. Let $n_m= 2^mn_0$, $l\in \mathbb{N}$, then for $1\leq l\leq m,$
     \begin{align*}
       \frac{1}{n_m}\abs{(f_k)_{B(n_{l-1})} - (f_k)_{B(n_{l})}} & \leq \frac{1}{n_m}\norm{f_k - (f_k)_{B(n_l)}}_{1, B(n_{l-1})}\\
         &  \leq 2^{l-m}\frac{1}{n_l}\frac{\abs{B(n_l)}}{\abs{B(n_{l-1})}}
         \norm{f_k - (f_k)_{B(n_l)}}_{1, B(n_{l})} \\
         & \stackrel{\eqref{eq:reg:large_balls}}{\leq} \frac{C_{\mathrm{reg}}}{C_{\mathrm{reg}}} 2^{d-m+l} \delta. 
     \end{align*}
     Thus, 
     \begin{align*}
         & \uplim_{m\to \infty}\frac{1}{n_m}\norm{\chi^i(\omega, \cdot)}_{2\rho, B(n_m)} \\
         & \qquad \quad \leq \uplim_{m\to \infty}\frac{1}{n_m}\norm{f_k}_{2\rho, B(n_m)} + \uplim_{m\to \infty}\frac{2}{n_m}\norm{\phi_k^i}_{L^\infty} \\
         & \qquad \quad \leq  \uplim_{m\to \infty} \Big(\frac{1}{n_m}\norm{f_k - (f_k)_{B(n_m)}}_{2\rho, B(n_m)} +\frac{1}{n_m}\sum_{l=1}^m \abs{(f_k)_{B(n_l)} - (f_k)_{B(n_{l-1})}}  \Big)\\
         & \qquad \qquad \qquad + \uplim_{n\to \infty} \frac{1}{n_m}\abs{(f_k)_{B(n_0)}} +  \uplim_{m\to \infty}\frac{2}{n_m}\norm{\phi_k^i}_{L^\infty} \\
         & \qquad \quad \leq \uplim_{m\to \infty}(\delta +  \frac{C_{\mathrm{reg}}}{C_{\mathrm{reg}}} \delta \sum_{l=1}^{m}2^{d-m+l}) + 0 + 0\\
         &  \qquad \quad \leq (1+ \frac{C_{\mathrm{reg}}}{C_{\mathrm{reg}}} 2^{d+1})\delta.  
     \end{align*}
      Since $\delta$ is chosen arbitrarily,  
      \begin{align*}
          \uplim_{k\to \infty}\uplim_{n\to \infty}\frac{1}{n_m}\norm{\chi^i(\omega, \cdot)}_{2\rho, B(n_m)}=0.
      \end{align*}
      On the other hand, by \eqref{eq:reg:large_balls}, there exists a constant $C$, such that  for every $n\in [n_{m-1}, n_m-1]$, $m\in \mathbb{N}$, 
      \begin{align*}
          \frac{1}{n}\norm{\chi^i(\omega, \cdot)}_{2\rho, B(n)} \leq  C \frac{1}{n_m}\norm{\chi^i(\omega, \cdot)}_{2\rho, B(n_m)}.
      \end{align*}
      This concludes \eqref{eqn:space-averaged-sublinearity}.
\end{proof}

\subsubsection{Local martingale QFCLT}
For $\omega \in \Omega$, $i=1, \dots, d$, $n\in \mathbb{N}$,  let $M^{(n), i, \omega}_t:= \frac{1}{n}\Phi^i(\omega, X_{n^2t}), $ and $M_t^{i, \omega}\equiv M_t^{(1), i, \omega}.$

Form Lemma \ref{lem:corrector_construction}-(ii), it follows easily that $M^{i, \omega}$ is a locally $L^2(\Prob_0^\omega)$-integrable local martingale, with localizing sequence $(\nu_k)_{k\in \mathbb{N}}$  of jumping times of $X$. Moreover, the quadratic variation $\sigma_t^{i, \omega}:= [M^{i, \omega}]_t=\sum_{0\leq s\leq t}\abs{\Delta M_s^{i, \omega}}^2 =\sum_{k=1}^{N(t)}\abs{\Delta M_{\nu_k}^{i, \omega}}^2$ is locally $L^1(\Prob_0^\omega)$-integrable, with its unique \textit{compensator} constructed\footnote{E.g. see L\'{e}vy system theorem \cite[Theorem VI.28.1]{RW20}. \label{ft:Levy-system-theorem}} as
\begin{align*}
    \tilde{\sigma}^{i, \omega}_t :=  \langle\,M^{i, \omega}\,\rangle_t
    &= \sum_{k=1}^{N(t^-)}\Mean_0^\omega[\abs{\Delta M_{\nu_k}^{i, \omega}}^2 \mid \mathcal{F}^\omega_{\nu_{k-1}}] \, \mu^\omega(\tau_{X_{\nu_k}})(t-\nu_k) 
   \\ & = \int_0^t \sum_{z\in \mathcal{N}} c^\omega(\tau_{X_s}\omega, z)\abs{\Phi^i(\tau_{X_s}\omega, z)}^2\dd s.
\end{align*}
The formulation of the compensator is useful in the next proof. 

\begin{proof}[Proof of Lemma \ref{lem:sublinearCorrector_martingaleFCLT}-(ii)] We use Helland's local martingale FCLT to prove the weak convergence, then show that the space-averaged $\ell^1$-sublinearity is sufficient to conclude that \eqref{def:covariance} is non-degenerate.

First, by Cram\'{e}r-Wold's device, it is sufficient to show that for every $v\in \mathbb{R}^d$ and $\mathbb{P}$-a.e. $\omega$, $v\cdot M^{(n), \omega}$ converges weakly to a one-dimensional Brownian motion with variance $v\cdot \Sigma^2v =  2\norm{v\cdot \Phi}^2_{\mathrm{cov}}.$ Similar to the case of $M^{i, \omega}$, $v\cdot M^{(n), \omega}$ is locally square integrable local martingale, and for $\varepsilon\geq 0$, the quadratic variation process 
   \begin{align*}
       \sigma_t^\varepsilon[v\cdot M^{(n), \omega}]:= \sum_{0\leq s\leq t}\abs{v\cdot \Delta M_s^{(n), \omega}}^2 \mathbbm{1}_{\{\abs{v\cdot \Delta M_s^{(n), \omega}}>\varepsilon\}}
   \end{align*}
   has a unique compensator,  given\footref{ft:Levy-system-theorem} by
   \begin{align}
      \label{eqn:compensator}  
      \tilde{\sigma}_t^\varepsilon[v\cdot M^{(n), \omega}]:= \int_0^{n^2t} \sum_{z\in \mathcal{N}} c^\omega(\tau_{X_s}, z)\abs{v\cdot \frac{1}{n}\Phi(\tau_{X_s}\omega, z)}^2 \mathbbm{1}_{\{ \abs{v \cdot \frac{1}{n}\Phi(\tau_{X_s}\omega, z)}>\varepsilon\}}\dd s
   \end{align}
  
By Helland's local martingale FCLT \cite[Theorem 5.1a]{Hel82}, the FCLT holds for $v\cdot M^\omega$ if following two conditions hold:
\begin{enumerate}[(H1)]
    \item $\tilde{\sigma}_t^0[v\cdot M^{(n), \omega}]\xrightarrow{n\to \infty}2t\norm{v\cdot \Phi}^2_{\mathrm{cov}}$ in $\Prob_0^\omega$, 
    \item $\tilde{\sigma}_t^\varepsilon[v\cdot M^{(n), \omega}]\xrightarrow{n\to \infty} 0$, in $\Prob_0^\omega$ and for all  $\varepsilon>0. $ 
\end{enumerate}
Clearly, (H1) follows immediately by applying to \eqref{eqn:compensator} the time-invaraince and -ergodicity (Proposition \ref{prop:time-ergodicity-and-non-explosion}). 
For (H2), using the same argument and the monotone convergence theorem, we have for $\varepsilon>0$, $n\geq K$, 
\begin{align*}
       \tilde{\sigma}_t^\varepsilon[v\cdot M^{(n), \omega}] & \leq   \int_0^{n^2t} \sum_{z\in \mathcal{N}} c^\omega(\tau_{X_s}, z)\abs{v\cdot \frac{1}{n}\Phi(\tau_{X_s}\omega, z)}^2 \mathbbm{1}_{\{ \abs{v \cdot \frac{1}{K}\Phi(\tau_{X_s}\omega, z)}>\varepsilon\}}\dd s \\
       & \xrightarrow{n\to \infty}2t\norm{(v\cdot \Phi)\mathbbm{1}_{\{\abs{v\cdot \Phi}>K\varepsilon}}^2_{\mathrm{cov}}\\
       & \xrightarrow{K\to \infty}0, \quad \mathbb{P}\times \Prob_0^\omega\text{-a.s.}
\end{align*}
Now we check the non-degeneracy, which shall follow from Assumption \ref{ass:P}(ii) and space-averaged $\ell^1$-sublinearity (a consequence of Proposition \ref{prop:space-averaged-sublinearity}).
Suppose for some $a\in \mathbb{R}^d$, with $\norm{a}_{\infty}=1$, $a\cdot \Sigma^2a$=0. Then 
\begin{align*}
    0 = \sum_{i,j=1}^da_ia_j\mathbb{E}\Big[\sum_{z\in \mathcal{N}}c_{\mathrm{s}}^{\omega}(0, z)\Phi^i(\omega, z)\Phi^j(\omega,z) \Big]
    = \mathbb{E}\Big[\sum_{z\in \mathcal{N}} c_{\mathrm{s}}^{\omega}(0,z)\big(\sum_{i=1}^da_i\Phi^i(\omega, z)\big)^2 \Big].
\end{align*}
Together with the cocycle property and \eqref{ass:symmetric-ellipticity-2}, which is equivelent to \eqref{ass:symmetric-ellipticity} in Assumption \ref{ass:P}-(ii), we have $a\cdot \Phi(\omega, x)=0$, for $\mathbb{P}$-a.e. $\omega$ and  all $x\in \mathbb{Z}^d$. Since $\Pi=\chi + \Phi$,  $a\cdot \Pi= a \cdot \chi$. Moreover, 
\begin{align}
     \frac{1}{\abs{B(n)}} \sum_{x\in B(n)}  \frac{1}{n} \abs{ a\cdot\Pi(\omega, x)} & =  \frac{1}{\abs{B(n)}} \sum_{x\in B(n)} \frac{1}{n}\abs{ a\cdot \chi(\omega, x)} \label{eq:degeneracy}\\
     & \leq \norm{a}_{\infty}\cdot \frac{1}{\abs{B(n)}} \sum_{x\in B(n)} \frac{1}{n} \abs{1 \cdot \chi(\omega, x)} \xrightarrow[]{n\to \infty}0, \nonumber
\end{align}
where the convergence is due to space-averaged $\ell^1$-sublinearity of the corrector.
Notice that 
\begin{align*}
    \text{LHS of }\eqref{eq:degeneracy} & =  \frac{1}{\abs{B(n)}} \sum_{x\in B(n)}  \abs{ a\cdot \frac{x}{n}} \\
    & =  \frac{1}{\abs{B_{\frac{1}{n}\mathbb{Z}^d}(1)}} \sum_{y\in B_{\frac{1}{n}\mathbb{Z}^d}(1)} \abs{ a\cdot y} \xrightarrow[]{n\to \infty} \fint_{y\in B_{\mathbb{R}^d}(1)} \abs{ a\cdot y} \dd y, 
\end{align*}
where $B_{\frac{1}{n}\mathbb{Z}^d}(1):=\{y\in \frac{1}{n}\mathbb{Z}^d: \norm{y}_\infty\leq 1\}$ and $B_{\mathbb{R}^d}(1):=\{y\in \mathbb{R}^d: \norm{y}_\infty \leq 1\}$.
As the integrand is a continuous function, we obtain $\abs{ a\cdot y} =0, \forall y\in  B_{\mathbb{R}^d}(1)$. However, this is only possible for $a=0$. Therefore we conclude that $\Sigma^2$ is non-degenerate. 
\end{proof}
\begin{remark} For alternative proof related to Lemma \ref{lem:sublinearCorrector_martingaleFCLT}-(ii), see \cite[Proposition 5.9]{AS23} for  martingale QFCLT, and see \cite[Proposition 2.5]{DNS18} for the non-degeneracy of $\Sigma^2$.
\end{remark}

\subsubsection{Vanishing corrector}
\begin{proof}[Proof of Lemma \ref{lem:sublinearCorrector_martingaleFCLT}-(iii)]
    It follows from Lemma \ref{lem:sublinearCorrector_martingaleFCLT}-(i), -(ii), and an application of Doob's maximal inequality. The proof is standard,  see for instance, \cite[Proposition 2.13]{ADS15}.
\end{proof}



\subsection{Proof of Theorem \ref{thm:QFCLT}}

Finally, we are ready to conclude the main result. 
\begin{proof}[Proof of Theorem \ref{thm:QFCLT}]
    The QFCLT for $X$ follows form Lemma \ref{lem:corrector_construction} and Lemma \ref{lem:sublinearCorrector_martingaleFCLT}.
\end{proof}
\section{Maximal inequality for harmonic functions}
\label{sec:max_ineq}
In this section we prove Proposition \ref{prop:maximum-inequality}. To align with the p-q moment condition in Assumption \ref{ass:p-q-moment}, we consider weighted graph inequalities of Proposition \ref{prop:weighted-graph}. These results follow from Proposition \ref{prop:graph}, which states well-known graph inequalities for the Euclidean lattice $(\mathbb{Z}^d, E_d)$. To derive the desired maximal inequality, we require energy estimate in the form of \eqref{eqn:energy-est}, that will be supplied to an iteration scheme, either Moser's (see \cite[Section 3]{ADS15}), or De Giorgi's. We choose the De Giorgi approach here, as it is robust in generalising the maximal inequality for  caloric functions, which is an important ingredient for the local limit theorem (see \cite{ADS16}).  Moreover, while the energy estimate is trivial for the symmetric case (i.e. $c^\omega = c_{\mathrm{s}}^\omega)$, it becomes non-trivial when the symmetry is absent, as the integration by parts principle is no longer applicable. This technical challenge is resolved in Proposition \ref{prop:energy-est} by the finite cycle decomposition, where the space-averaged $p$-th moment of $\mu^{(2), \omega}$ plays a role. Furthermore, we remark that, with minimal adaptation, the method in this section can be applied to derive the maximal inequalities on a general connected, locally finite, weighted graph that admits a finite cycle decomposition, provided the graph inequalities in Proposition \ref{prop:graph} are satisfied.

To proceed, we first state the following graph inequalities for the lattice graph $(\mathbb{Z}^d, E_d)$. 


%
%

%
\begin{prop}\label{prop:graph}
  For any $d \geq 2$, there exist constants $c_{\mathrm{reg}} , C_{\mathrm{reg}}, C_{\mathrm{S}_1}, C_{\mathrm{P}}\in (0, \infty)$ that depend at most on $d$,  
  such that for any $x \in \mathbb{Z}^d$, $n\in \mathbb{N}$ the following holds true:
  \begin{enumerate}[(i)]
  \item (Volume regularity) 
    \begin{align}\label{eq:reg:large_balls}
      c_{\mathrm{reg}}\, n^d \;\leq\; |B(x, n)| \;\leq\; C_{\mathrm{reg}}\, n^d.
    \end{align}

  \item (Sobolev inequality) 
  For every function $u\colon \mathbb{Z}^d \to\mathbb{R}$ with $\supp{u} \subset B(x, n)$,
    \begin{align} \label{eq:sobolev:ineq}
      \Norm{u}{d/(d-1), B(x, n)}
      \;\leq\;
      C_{\mathrm{S_1}}\, \frac{n}{\abs{B(x, n)}}\, \sum_{\{y, y'\} \in E_d} |u(y) - u(y')|.
    \end{align}

  \item (Weak Poincar\'{e} inequality) 
    For every function $u\colon \mathbb{Z}^d \to\mathbb{R}$
    \begin{align}\label{eq:poincare:ineq}
      \Norm{u - (u)_{B(x, n)}}{1, B(x, n)}
      \;\leq\;
      C_{\mathrm{P}}\, \frac{n}{\abs{B(x, n)})} 
      \sum_{\substack{y, y' \in B(x, n)\\ \{y, y'\} \in E_d}}
      \mspace{-24mu}|u(y) - u(y')|,
    \end{align}
    where $(u)_{B(x, n)} \ldef \frac{1}{|B(x, n)|} \sum_{y \in B(x, n)} u(y)$.
  \end{enumerate}
\end{prop}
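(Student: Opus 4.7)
The plan is to handle the three parts separately: (i) is an elementary direct count, (ii) follows from the classical Loomis--Whitney inequality, and (iii) from a chaining argument along axis-aligned paths inside $B(x, n)$. For (i), $B(x, n)$ is a translate of $\{-n, \ldots, n\}^d$, so $|B(x, n)| = (2n+1)^d$ exactly, and the claim follows from $(2n)^d \leq (2n+1)^d \leq (3n)^d$ for $n \geq 1$, giving admissible constants $c_{\mathrm{reg}} = 2^d$ and $C_{\mathrm{reg}} = 3^d$.

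For (ii), I would first establish the global $\ell^1$-Sobolev inequality
\begin{align*}
  \Norm{u}{\ell^{d/(d-1)}(\mathbb{Z}^d)}
  \;\leq\;
  \frac{1}{d} \sum_{\{y, y'\} \in E_d} |u(y) - u(y')|
\end{align*}
for every compactly supported $u\colon \mathbb{Z}^d \to \mathbb{R}$, and then convert it to the ball-averaged form in the statement by dividing both sides by $|B(x, n)|^{(d-1)/d}$ and invoking $|B(x, n)|^{1/d} \leq 3n$ from part (i). For the global inequality, compact support yields the telescoping bound $|u(y)| \leq a_i(y) \ldef \sum_{t \in \mathbb{Z}} |u(y + (t+1) e_i) - u(y + t e_i)|$ in each coordinate direction $i$, and crucially $a_i(y)$ depends only on the coordinates of $y$ distinct from the $i$-th. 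Hence $|u(y)|^{d/(d-1)} \leq \prod_{i=1}^d a_i(y)^{1/(d-1)}$, and an application of the discrete Loomis--Whitney inequality followed by the AM--GM inequality closes the estimate.

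For (iii), I would start from the pointwise bound
\begin{align*}
  \bigl| u(y) - (u)_{B(x, n)} \bigr|
  \;\leq\;
  \frac{1}{|B(x, n)|} \sum_{y' \in B(x, n)} |u(y) - u(y')|,
\end{align*}
and, for each pair $y, y' \in B(x, n)$, connect them by the canonical axis-aligned path $P(y, y')$ that modifies the coordinates in the order $1, 2, \ldots, d$. Since $B(x, n)$ is a product of intervals, $P(y, y')$ lies inside $B(x, n)$ and has length at most $2dn$. Writing $|u(y) - u(y')| \leq \sum_{e \in P(y, y')} |\nabla_e u|$ and exchanging the order of summation reduces the bound to controlling, for each edge $e$ with both endpoints in $B(x, n)$, the number $N(e)$ of pairs $(y, y')$ whose canonical path uses $e$. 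A direct coordinate-by-coordinate count yields $N(e) \leq (2n+1)^{d+1} \leq C_d\, n\, |B(x, n)|$ uniformly in $e$, and summing over $y \in B(x, n)$ and dividing by $|B(x, n)|$ delivers (iii).

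The main obstacle will be the combinatorial book-keeping in (iii): verifying that the canonical path between any two vertices of $B(x, n)$ stays inside $B(x, n)$, and that $N(e)$ scales sharply as $n \cdot |B(x, n)|$ rather than picking up an extra factor of $n$. Both points rest on the product structure of $B(x, n)$ in the $\ell^\infty$ metric, but the accounting over the $d$ coordinate directions needs to be done carefully to secure the correct exponent in the final Poincaré bound.
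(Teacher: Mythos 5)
Your proposal is correct, and it takes a genuinely different route from the paper. The paper's proof is essentially a citation: it notes $|B(x,n)| = (2n+1)^d$ for (i), invokes an isoperimetric inequality for large sets (from Deuschel--Nguyen--Slowik) to deduce (ii), and derives (iii) from a weak relative isoperimetric inequality combined with a discrete co-area formula (following Saloff-Coste). You instead give a self-contained, elementary argument: a direct count for (i), the Loomis--Whitney route for the global $\ell^1$-Sobolev inequality followed by a normalization step for (ii), and a chaining argument over canonical axis-aligned paths for (iii). Both routes are standard; yours has the benefit of producing explicit constants without external references, and the isoperimetric route has the benefit of applying to more general graphs that satisfy a volume-regularity and isoperimetric condition but lack the exact product structure of $\mathbb{Z}^d$ (which your part (iii) leans on to keep the canonical paths inside $B(x,n)$). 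One small check worth recording in your write-up of (iii): the count $N(e) \le (2n+1)^{d+1}$ comes from the decomposition $(i-1) + 2 + (d-i) = d+1$ degrees of freedom (the first $i-1$ coordinates of $y$, the pair $(y_i, y_i')$ bracketing $e$ in direction $i$, and the last $d-i$ coordinates of $y'$), which correctly yields $N(e) \le (2n+1)\,|B(x,n)|$ and hence the stated scaling; naively bounding the number of pairs by $|B(x,n)|^2$ times a path length $O(n)$ would have lost a factor. Also, the constants $c_{\mathrm{reg}} = 2^d$, $C_{\mathrm{reg}} = 3^d$ that you give for (i) are correct, whereas the values $c_{\mathrm{reg}}=4^d$, $C_{\mathrm{reg}}=(5/2)^d$ printed in the paper do not satisfy \eqref{eq:reg:large_balls} for all $n \ge 1$ and appear to be a typographical slip.
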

\begin{proof}
    For the Euclidean lattice, $\abs{B(x,n)} = (2n+1)^d$, for $x\in \mathbb{Z}^d, n\in \mathbb{N}$. Proposition~\ref{prop:graph}-(i) is satisfied with $c_{\mathrm{reg}}=4^d, C_{\mathrm{reg}}=(\frac{5}{2})^d$.
    Provided Proposition~\ref{prop:graph}-(i), the Sobolev inequality in \eqref{eq:sobolev:ineq} follows from an isoperimetric inequality for large sets, see \cite[Proposition~3.5]{DNS18}.  The weak Poincar\'{e} inequality in \eqref{eq:poincare:ineq} follows from a (weak) relative isoperimetric inequality by applying a discrete version of the co-area formula, see \cite[Lemma~3.3.3]{Sa96}.
\end{proof}

We equip the graph $(\mathbb{Z}^d, E_d)$ with positive weight function $c: E_d \to (0, \infty)$, and define $\nu: \mathbb{Z}^d\to (0, \infty)$ by $\nu(y): = \sum_{ \{y, y'\}\in E_d} c(y,y')^{-1}$. Further for any $q\in [1, \infty)$, set \begin{align}
    \label{def:rho}
        \rho\equiv \rho(d, q):= \frac{d}{(d-2)+ d/q}.
    \end{align} 

It then follows from Proposition \ref{prop:graph}:

\begin{prop}[Weighted graph inequalities]
\label{prop:weighted-graph}For any $d\geq 2, q\in [1, \infty)$, there exist constants $C_{\mathrm{WS}}, C_{\mathrm{LP}}\in (0, \infty)$ that depend at most on $d, q$ such that for any $x\in \mathbb{Z}^d$ the following holds true: 
\begin{enumerate} [(i)]
    \item (Weighted Sobolev inequality on a large scale) 
    For every function $u: \mathbb{Z}^d \to \mathbb{R}$, and every cut-off function $\eta: \mathbb{Z}^d \to [0,1]$  with $\supp{\eta} \subset B(x, n)$,   
    \begin{align*}
        \norm{(\eta u)}^2_{2\rho, B(x, n)} \leq C_{\mathrm{WS}}\frac{n^2}{\abs{B(x,n)}}\norm{\nu}_{q, B(x,n)} \sum_{\{y,y'\}\in E_d}c(y,y')\abs{\eta u(y)- \eta u(y')}^2.
    \end{align*}
    \item (Local Poincar\'{e} inequality on a large scale) For every function $u: \mathbb{Z}^d \to \mathbb{R}$
    \begin{align*}
        & \norm{u-(u)_{B(x, n)}}^2_{2\rho, B(x, n)} \\
        & \qquad  \leq C_{\mathrm{LP}}\frac{n^{2}}{\abs{B(x,n)}}\norm{\nu}_{q, B(x,n)}  \sum_{\substack{ y,y'\in B(x, n) \\ \{y, y'\}\in E_d }}c(y,y')\abs{ u(y)- u(y')}^2.
    \end{align*}
\end{enumerate}
\end{prop}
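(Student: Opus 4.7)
The plan is to derive both inequalities from their unweighted counterparts in Proposition~\ref{prop:graph} by testing against $|w|^{s}$ for the critical exponent $s := 2\rho(d-1)/d$, then applying the pointwise bound $||a|^{s} - |b|^{s}| \leq s(|a|^{s-1} + |b|^{s-1})|a - b|$, a Cauchy--Schwarz in the edge sum that splits $c(y,y') = c(y,y')^{1/2} \cdot c(y,y')^{1/2}$, and a H\"{o}lder in the vertex sum with exponents $(q, q/(q-1))$ to produce the factor $\norm{\nu}_{q, B(x,n)}$. The exponent $s$ is calibrated so that both $s \cdot d/(d-1) = 2\rho$ (matching the Sobolev target $L^{d/(d-1)}$ with the desired $L^{2\rho}$) and $2(s-1) \cdot q/(q-1) = 2\rho$ (so that the H\"{o}lder step returns a factor $\norm{w}_{2\rho}^{s-1}$ which can be absorbed into the LHS); both conditions reduce, via $\rho = d/(d-2+d/q)$, to $s = 1 + \rho(q-1)/q$, easily verified.

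For (i), I apply \eqref{eq:sobolev:ineq} to the compactly supported function $|\eta u|^{s}$ and observe $\norm{|\eta u|^{s}}_{d/(d-1), B(x,n)} = \norm{\eta u}_{2\rho, B(x,n)}^{s}$. The pointwise bound together with Cauchy--Schwarz in the edge sum produces on the RHS the weighted Dirichlet energy $\bigl(\sum c(y,y')|\eta u(y) - \eta u(y')|^2\bigr)^{1/2}$ multiplied by $\bigl(\sum c(y,y')^{-1}(|\eta u(y)|^{s-1} + |\eta u(y')|^{s-1})^2\bigr)^{1/2}$. Using $(a+b)^2 \leq 2(a^2 + b^2)$ and $\sum_{\{y,y'\} \in E_d} c(y,y')^{-1} = \nu(y)$ collapses the second factor into the vertex sum bound $2\bigl(\sum_y \nu(y)|\eta u(y)|^{2(s-1)}\bigr)^{1/2}$, which by H\"{o}lder with exponents $(q, q/(q-1))$ is at most $2\,|B(x,n)|^{1/2}\,\norm{\nu}_{q, B(x,n)}^{1/2}\,\norm{\eta u}_{2\rho, B(x,n)}^{s-1}$. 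Dividing both sides by $\norm{\eta u}_{2\rho, B(x,n)}^{s-1}$ (the vanishing case being trivial) and squaring yields (i).

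For (ii), the same mechanism is the natural template, but the main obstacle is that $|u - (u)_{B(x,n)}|^{s}$ is not compactly supported, so \eqref{eq:sobolev:ineq} is inapplicable, while \eqref{eq:poincare:ineq} only yields an $L^{1}$-type bound for the re-centered function $|w|^s - (|w|^s)_B$ whose average cannot simply be discarded. I bypass this by centering at a median: choose a median $m$ of $u$ over $B(x,n)$, so that $(u-m)_{\pm}$ each vanish on at least half of the vertices of $B(x,n)$. The relative isoperimetric inequality on balls in $\mathbb{Z}^d$ underlying \eqref{eq:poincare:ineq} (cf.\ \cite[Lemma~3.3.3]{Sa96}) supplies, for such \emph{half-zero} non-negative functions $v$, the Sobolev-type estimate $\norm{v^{s}}_{d/(d-1), B(x,n)} \leq C\, n/|B(x,n)|\,\sum_{y,y'\in B(x,n),\,\{y,y'\}\in E_d} |v(y)^s - v(y')^s|$. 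Running the same Cauchy--Schwarz--H\"{o}lder chain as in (i) on $v = (u - m)_{\pm}$, combined with the contraction $|(u-m)_{\pm}(y) - (u-m)_{\pm}(y')| \leq |u(y) - u(y')|$, yields the desired bound for each one-sided part. Finally, $\norm{u - (u)_{B(x,n)}}_{2\rho, B(x,n)} \leq 2\norm{u - m}_{2\rho, B(x,n)} \leq 2\bigl(\norm{(u-m)_{+}}_{2\rho, B(x,n)} + \norm{(u-m)_{-}}_{2\rho, B(x,n)}\bigr)$ concludes (ii).
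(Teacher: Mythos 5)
Your exponent calibration $s = 2\rho(d-1)/d$ (equivalently $s = 1 + \rho(q-1)/q$) is correct, and part~(i) is essentially identical to the paper's argument: apply the unweighted Sobolev inequality to $|\eta u|^s$, use $\big||a|^s - |b|^s\big| \leq s(|a|^{s-1} + |b|^{s-1})|a-b|$, split the conductance by Cauchy--Schwarz, and finish with H\"{o}lder in the vertex variable followed by absorption of $\norm{\eta u}_{2\rho}^{s-1}$ into the LHS. (Note the degenerate case $q = 1$, i.e.\ $s = 1$, which is included in the statement's range $q \in [1,\infty)$: there your division step is vacuous and the H\"{o}lder exponent $q/(q-1)$ collapses to $\infty$, so that part of the chain needs to be replaced by the trivial bound $\sum_{y\in B}\nu(y) \leq |B|\,\norm{\nu}_{1,B}$; this is a minor fix, not a conceptual issue.)

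Part~(ii) is where you diverge from the paper, and your route is a legitimate alternative. The paper handles the loss of compact support by invoking Coulhon's equivalence theorem (\cite[Theorem~4.1]{Cou96}) to upgrade the $\ell^1$-Poincar\'{e} inequality \eqref{eq:poincare:ineq} plus volume regularity into a $(\tfrac{d}{d-1},1)$-Poincar\'{e} inequality, after which the chain from (i) goes through against $|u - (u)_{B}|^{s}$. You instead center at a median $m$, observe that $(u-m)_{\pm}$ each vanish on at least half of $B(x,n)$, and obtain a Sobolev-type estimate for such \emph{half-zero} functions directly from the relative isoperimetric inequality on cubes via coarea, then sum the two one-sided contributions. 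Both approaches replace the compact-support assumption by a different structural hypothesis; yours is more self-contained (you avoid the black-box Coulhon result) but does quietly require the \emph{strong} relative isoperimetric inequality $|\partial_B A| \gtrsim |A|^{(d-1)/d}$ for $|A| \leq |B|/2$, which is stronger than the ``weak'' version the paper explicitly invokes behind \eqref{eq:poincare:ineq}. This strong version is indeed classical for $\mathbb{Z}^d$-cubes, so your proof is sound, but you should cite it explicitly rather than as ``the isoperimetric inequality underlying \eqref{eq:poincare:ineq}'': the $\ell^1$-Poincar\'{e} inequality only witnesses the weaker $(1,1)$ consequence, and the upgrade to the $(d-1)/d$ exponent is exactly what Coulhon's theorem is there to supply in the paper's version.

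Two smaller points worth cleaning up: the contraction bound $|(u-m)_{\pm}(y) - (u-m)_{\pm}(y')| \leq |u(y) - u(y')|$ you invoke is correct but should be stated before being fed into the Cauchy--Schwarz step, because you first expand $|v(y)^{s} - v(y')^{s}|$ and only then replace $|v(y)-v(y')|$ by $|u(y)-u(y')|$ — the order matters since $v^{s}$ is not itself a $1$-Lipschitz image of $u$. And the final step $\norm{u-(u)_{B}}_{2\rho,B} \leq 2\norm{u-m}_{2\rho,B}$ uses $|(u)_{B} - m| \leq \norm{u-m}_{1,B} \leq \norm{u-m}_{2\rho,B}$, which you should spell out since it requires $2\rho \geq 1$; this holds here because $\rho \geq d/(2d-2) \geq 1/2$.
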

\begin{proof} For (i), one uses Gagliardo-Nierenberg's argument, and apply the Cauchy-Schwarz and Hölder's inequality to separate $\nu$ from the last energy term, cf. \cite[Proposition 3.5]{ADS15}. For (ii), \cite[Theorem 4.1]{Cou96} implies that by Proposition \ref{prop:graph}-(i), the $\ell^1$-Poincaré inequality \eqref{eq:poincare:ineq} is equivalent to $(\frac{d}{d-1}, 1)$-Poincaré inequaity, i.e. \begin{align*}
    \norm{u-(u)_{B(x,n)}}_{\frac{d}{d-1}, B(x,n)} \leq C_{\mathrm{P}_1} \frac{n}{\abs{B(x, n)}} 
      \sum_{\substack{y, y' \in B(x, n)\\ \{y, y'\} \in E_d}}
      \mspace{-24mu}|u(y) - u(y')|, 
\end{align*}
for some constant $C_{P_1}$. The rest follows analogously as in  (i). 
\end{proof}

Now we consider weighted finite cycles on  the graph. Let $\Gamma_0=\{\vec{\gamma}_{i} : i \in \mathbb{N}\}$ be as described in Section \ref{subsec:model}, i.e. $\Gamma_0$ is a collection of finite, directed cycles on $\mathbb{Z}^d$ that satisfies \eqref{eqn:cycle_cover_nns}. Then, $\Gamma:= \{\vec{\gamma}+x, \vec{\gamma}\in \Gamma_0, x\in \mathbb{Z}^d\}$ satisfies %
\begin{align}\label{eq:def:cycle:condition}
  \bigl\{
    \{x, y\} :
    \exists\, \vec{\gamma} \in \Gamma
    \text{ s.th. either } (x, y) \in \vec{\gamma}
    \text{ or } (y, x) \in \vec{\gamma}
  \bigr\}
  \;=\;
  E_d,
\end{align}
Recall that, in each environment $\omega$,  to each cycle $\vec{\gamma}+x \in \Gamma$, we have associated cycle weight $\omega_{\vec{\gamma}}(x)\in [0, \infty)$. Since this section deals with deterministic weights, for simplicity, we denote the cycle weights by $\omega(\vec{\gamma})$ for every $\vec{\gamma} \in \Gamma$. 
The \emph{Dirichlet form}, $(\mathcal{E}_{\Gamma}^{\omega}, \mathcal{D}(\mathcal{E}_{\Gamma}^{\omega}))$ is then given by
\begin{align} \label{eq:def:DF}
  \left\{
    \begin{array}{rcl}
      \mathcal{E}_{\Gamma}^{\omega}(f, g) & \!\ldef\! &
      \sum_{\vec{\gamma} \in \Gamma} \omega(\vec{\gamma})\,
      \mathcal{E}_{\vec{\gamma}}(f, g)
      \\[1ex]
      \mathcal{D}(\mathcal{E}_{\Gamma}^{\omega})
      &\!\ldef\!&
      \bigl\{ f : \mathbb{Z}^d\to \mathbb{R} \text{ local} 
      \bigr\}
    \end{array}
 \right.,
\end{align}
where we define along each cycle $\vec{\gamma} \in \Gamma$ a quadratic form, $\mathcal{E}_{\vec{\gamma}}(f, g)$ by
\begin{align}
  \mathcal{E}_{\vec{\gamma}}(f, g)
  \;\ldef\;
  \sum_{(x, y) \in \vec{\gamma}} f(x)\, \bigl(g(x) - g(y)\bigr).
\end{align}
Notice that $\mathcal{E}_{\vec{\gamma}}(f-c, g) = \mathcal{E}_{\vec{\gamma}}(f, g)$ for any $f, g\colon \mathbb{Z}^d \to\mathbb{R}$, $\vec{\gamma} \in \Gamma$ and $c \in \mathbb{R}$. Indeed, since $\sum_{(x, y) \in \vec{\gamma}} \bigl(g(x) - g(y)\bigr) = 0$, it holds that
\begin{align*}
  \mathcal{E}_{\vec{\gamma}}(f, g)
  \;=\;
  \sum_{(x, y) \in \vec{\gamma}} \bigl(f(x) - c\bigr)\bigl(g(x) - g(y)\bigr)
  \;=\;
  \mathcal{E}_{\vec{\gamma}}(f-c, g).
\end{align*}

Recall that $\vec{E}_d$ denotes the set of all oriented edges of $E_d$. Let $c^\omega, c_{\mathrm{s}}^{\omega}, \mu^\omega, \nu^\omega, \mu^{(k), \omega}$ and generator $L^\omega$ be as defined in Section \ref{sec:model-results}.

In particular, we have $\mathcal{E}^\omega_{\Gamma}(f,g)= \inp{f}{-L^\omega g}$, for all local functions $f, g: \mathbb{Z}^d\to \mathbb{R}$. For convenience, we write $\mathcal{E}_{\vec{\gamma}}(f)\equiv \mathcal{E}_{\vec{\gamma}}(f, f), \mathcal{E}^\omega_{\Gamma}(f)\equiv \mathcal{E}^\omega_{\Gamma}(f, f).$

\vspace{1em}
Next, we proceed to show a maximal inequality in Proposition \ref{prop:maximum-inequality} for harmonic functions on the cyclic weighted graph $(\mathbb{Z}^d, \vec{E}_d, \Gamma, \omega)$, where, in particular, for every function $u$ that is locally harmonic, its maximum in a smaller space box is bounded by its space averaged norm of a lower order, with a prefactor that is controllable by assumption. The result is derived from an energy estimate in Proposition \ref{prop:energy-est} via the De Giorgi iteration method, cf. Proposition \ref{prop:iteration}. It is non-trivial to obtain the  energy estimate when $c^\omega$ is non-symmetric. The successful attainment relies on the representation of $c^\omega$ as a sum of non-negative cycle weights $\omega(\vec{\gamma})$. 

\vspace{1em}
Throughout, we consider for all $(x,y) \in \vec{E}_d, x \in V$, 
  \begin{align*}
    0< c_{\mathrm{s}}^{\omega}(x,y) <\infty, \quad \text{and} \quad  \mu^{(2),\omega}(x) <\infty.
  \end{align*}
This condition clearly holds under Assumption \ref{ass:p-q-moment} for $p=q=1$.

The main ingredient for the proof of Proposition \ref{prop:maximum-inequality}  is the following result. 
\begin{prop}[Energy estimate on a cyclic weighted Euclidean lattice]
\label{prop:energy-est}
Let $B$ be a finite subset of $\mathbb{Z}^d$. 
Consider a function $\eta: \mathbb{Z}^d\to \mathbb{R}$ with 
\begin{align*}
    \supp \eta \subset B, \quad 0\leq \eta \leq 1, \quad \eta \equiv 0  \text{ on } \partial B.
\end{align*}
Suppose $u: \mathbb{R} \times \mathbb{Z}^d \to \mathbb{R}$ is either non-negative $L^\omega$-subharmonic $(-L^\omega u\leq 0)$,  or $L^\omega$-harmonic $(L^\omega u = 0)$ on $B$. Then we have for $p \in [1,\infty]$, there exists a constant $C_{\mathrm{En}} $ such that for all $t\in I $, 
\begin{align}
\label{eqn:energy-est}
  \frac{\mathcal{E}_\Gamma^\omega(\eta u)}{\abs{B}}\leq C_{\mathrm{En}} \norm{\mu^{(2), \omega}}_{p, B} \, \norm{\nabla \eta}^2_{\ell_\infty(E)}  \norm{u^2}_{p_*,B}, 
\end{align}
with $p_*= p/(p-1)$ being the H\"{o}lder conjugate of $p$. 

     \end{prop}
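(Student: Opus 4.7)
The first move is to observe that the left--hand side is already symmetric: by cycle telescoping one has
\[
\mathcal{E}_\Gamma^\omega(\eta u, \eta u) \;=\; \sum_{\{x,y\} \in E_d} c_s^\omega(x,y)\bigl(\eta u(x) - \eta u(y)\bigr)^2,
\]
so the full strength of the non--symmetric structure only enters through the subharmonicity condition. I would then apply the pointwise algebraic identity
\[
(\eta u(x) - \eta u(y))^2 \;=\; \bigl(\eta^2(x) u(x) - \eta^2(y) u(y)\bigr)\bigl(u(x) - u(y)\bigr) \,+\, u(x) u(y)(\eta(x) - \eta(y))^2,
\]
sum it against $c_s^\omega$, and handle the two resulting terms separately. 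The $(\eta(x) - \eta(y))^2$ summand is immediately bounded by $\|\nabla\eta\|_{\ell_\infty(E)}^2 \sum_x u^2(x)\,\mu^\omega(x)$, and since $\mu^\omega(x) \leq \mu^{(0),\omega}(x) \leq \mu^{(2),\omega}(x)$ this piece already matches the target after H\"older's inequality in the final step.

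For the principal term I would use $2\mathcal{E}_{\mathrm{sym}}^\omega(\eta^2 u, u) = \mathcal{E}_\Gamma^\omega(\eta^2 u, u) + \mathcal{E}_\Gamma^\omega(u, \eta^2 u)$. Subharmonicity gives $\mathcal{E}_\Gamma^\omega(\eta^2 u, u) = \langle \eta^2 u, -L^\omega u\rangle \leq 0$ because $\eta^2 u \geq 0$. A direct expansion of the two Dirichlet forms, using $u(x)(\eta^2(x) u(x) - \eta^2(y) u(y)) - \eta^2(x) u(x)(u(x) - u(y)) = u(x) u(y)(\eta^2(x) - \eta^2(y))$, yields
\[
\mathcal{E}_\Gamma^\omega(u, \eta^2 u) - \mathcal{E}_\Gamma^\omega(\eta^2 u, u) \;=\; T, \qquad T \;\ldef\; \sum_{\vec{\gamma}} \omega(\vec{\gamma}) \sum_{(x,y) \in \vec{\gamma}} u(x) u(y)\bigl(\eta^2(x) - \eta^2(y)\bigr),
\]
so that altogether $\mathcal{E}_\Gamma^\omega(\eta u, \eta u) \leq T + \|\nabla\eta\|_{\ell_\infty(E)}^2 \sum u^2 \mu^{(2),\omega}$ by the previous paragraph.

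The heart of the argument, and the step I expect to be hardest, is bounding $T$ by $C\,\|\nabla\eta\|_{\ell_\infty(E)}^2 \sum_x \mu^{(2),\omega}(x) u^2(x)$. The naive edge--by--edge bound gives only the first power of $\|\nabla\eta\|_{\ell_\infty(E)}$, which is insufficient: one must exploit the cycle closure $\sum_{(x,y) \in \vec{\gamma}} (\eta^2(x) - \eta^2(y)) = 0$. My plan is to re--express $T_{\vec{\gamma}}$ by inserting a cycle--dependent constant $M_{\vec{\gamma}}$,
\[
T_{\vec{\gamma}} \;=\; \sum_{(x,y) \in \vec{\gamma}} \bigl(u(x) u(y) - M_{\vec{\gamma}}\bigr)\bigl(\eta^2(x) - \eta^2(y)\bigr),
\]
and then apply Cauchy--Schwarz together with a Poincar\'e inequality on the cyclic graph on $\vec{\gamma}$ (whose spectral gap is of order $|\vec{\gamma}|^{-2}$). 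The factor $\sqrt{|\vec{\gamma}|}\,\|\nabla\eta\|_{\ell_\infty(E)}$ arises from $\sum_{(x,y) \in \vec{\gamma}}(\eta^2(x) - \eta^2(y))^2$, the Poincar\'e step produces another factor $|\vec{\gamma}|$ from the centered function $u(x) u(y) - M_{\vec{\gamma}}$, and Young's inequality on the resulting edge gradients (rewritten via the discrete product rule as increments of $u$) absorbs the remaining $u$--dependence into $\sum_{z \in \vec{\gamma}} u^2(z)$. The upshot should be a cycle--wise bound of the shape $|T_{\vec{\gamma}}| \lesssim |\vec{\gamma}|^2 \|\nabla\eta\|_{\ell_\infty(E)}^2 \sum_{z \in \vec{\gamma}} u^2(z)$.

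Once this is in hand, summing over $\vec{\gamma}$ produces the weight $\omega(\vec{\gamma}) |\vec{\gamma}|^2$, which is exactly what defines $\mu^{(2),\omega}$, giving $|T| \lesssim \|\nabla\eta\|_{\ell_\infty(E)}^2 \sum_x \mu^{(2),\omega}(x) u^2(x)$. The final step is H\"older's inequality,
\[
\frac{1}{|B|} \sum_{x \in B} \mu^{(2),\omega}(x)\, u^2(x) \;\leq\; \|\mu^{(2),\omega}\|_{p,B}\, \|u^2\|_{p_*,B},
\]
which converts the pointwise estimate into the space--averaged form \eqref{eqn:energy-est}. The subharmonic hypothesis enters only through $\mathcal{E}_\Gamma^\omega(\eta^2 u, u) \leq 0$; the assumption $\mu^{(2),\omega} < \infty$ is essential because the cycle Poincar\'e step is precisely what forces the length--squared moment to appear.
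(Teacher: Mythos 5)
Your setup is correct and the first two moves are clean: the symmetrization identity $\mathcal{E}_\Gamma^\omega(\eta u,\eta u) = \sum_{\{x,y\}} c_s^\omega(x,y)(\eta u(x)-\eta u(y))^2$, the polarization into a principal term plus a $u(x)u(y)(\eta(x)-\eta(y))^2$ remainder, and the reduction of the principal term to $\tfrac12\bigl[\mathcal{E}_\Gamma^\omega(\eta^2 u,u)+\mathcal{E}_\Gamma^\omega(u,\eta^2 u)\bigr]\le \tfrac12 T$ via subharmonicity are all valid, and the commutator formula $\mathcal{E}_\Gamma^\omega(u,\eta^2 u)-\mathcal{E}_\Gamma^\omega(\eta^2 u,u)=T$ with $T=\sum_{\vec\gamma}\omega(\vec\gamma)\sum_{(x,y)\in\vec\gamma}u(x)u(y)(\eta^2(x)-\eta^2(y))$ checks out (using doubly-stochasticity to symmetrize the diagonal terms). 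The gap is in the claimed cycle-wise bound $|T_{\vec\gamma}|\lesssim|\vec\gamma|^2\,\|\nabla\eta\|^2\sum_{z\in\vec\gamma}u^2(z)$. This estimate is simply not true without a compensating Dirichlet term. Take a triangle $v_0\to v_1\to v_2\to v_0$ with $u=(0,1,1)$ and $\eta=(1,1-\delta,1-2\delta)$, so $\|\nabla\eta\|_{\ell_\infty}=\delta$. Then $T_{\vec\gamma}=(1-\delta)^2-(1-2\delta)^2=2\delta+O(\delta^2)$, whereas $|\vec\gamma|^2\|\nabla\eta\|^2\sum u^2=18\delta^2$; for small $\delta$ the left side dominates by a full factor $\delta^{-1}$. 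Since your argument for $T$ is purely algebraic and does not invoke (sub)harmonicity, the counterexample disposes of the claimed bound.

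The structural reason the Cauchy--Schwarz plus cycle-Poincar\'e route cannot work is that after Cauchy--Schwarz, all $\eta$-dependence is segregated into the factor $\bigl(\sum_{(x,y)\in\vec\gamma}(\eta^2(x)-\eta^2(y))^2\bigr)^{1/2}\le 2\sqrt{|\vec\gamma|}\,\|\nabla\eta\|_{\ell_\infty}$, which can only ever deliver one power of $\|\nabla\eta\|_{\ell_\infty}$; the other factor involves only $u$ and cannot manufacture a second power. Moreover the cycle-Poincar\'e step applied to the edge function $u(x)u(y)$ produces increments $u(v_j)\bigl(u(v_{j+1})-u(v_{j-1})\bigr)$, which after Young give quartic expressions in $u$, not the Dirichlet form of $\eta u$. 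What you actually need at this point is an estimate of the form $|T_{\vec\gamma}|\le\varepsilon\,\mathcal{E}_{\vec\gamma}(\eta u)+C(\varepsilon)\,|\vec\gamma|^2\|\nabla\eta\|_{\ell_\infty}^2\sum_{z\in\vec\gamma}u^2(z)$ with $\varepsilon<2$, so that the $\mathcal{E}_{\vec\gamma}(\eta u)$ piece can be moved to the left-hand side; the second power of $\|\nabla\eta\|$ then appears because the optimal Young parameter is $\varepsilon\sim(|\vec\gamma|\,\|\nabla\eta\|_{\ell_\infty})^{-1}$. The paper's proof does exactly this, but without ever forming $T$: it telescopes $\langle\eta^2 u,I_{\vec\gamma}u\rangle$ directly into $-\mathcal{E}_{\vec\gamma}(\eta u)$ plus a remainder of the type $\sum(\eta u)(x)u(y)\nabla\eta(y,x)$, inserts an averaged reference point $z\in\dot\gamma_B$ and uses cycle closure $\sum\nabla\eta(y,x)=0$ to center, and then applies Young with the $\|\nabla\eta\|$-dependent $\varepsilon$. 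The crucial feature is that the remainder is expressed through increments of $\eta u$ (not of $u$ or of $u\cdot u$), so that the absorbable $\mathcal{E}_{\vec\gamma}(\eta u)$ term emerges naturally. You could salvage your symmetrized route by noticing that $T_{\vec\gamma}=\sum(\eta u)(x)u(y)\nabla\eta(y,x)+\sum u(x)(\eta u)(y)\nabla\eta(y,x)$ is twice (up to reversal) the paper's remainder $R_{\vec\gamma}$, and then estimating each summand as the paper does; but the Cauchy--Schwarz/Poincar\'e scheme as you have written it does not close.
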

     \begin{remark}
     \label{rm:overflow}
     It is easy to check that if $u$ is assumed in Proposition \ref{prop:energy-est}, then for any constant  $l\geq 0$, $(\abs{u}-l)_+$ is $L^\omega$-subharmonic on $ B$.  
     \end{remark} 
         
     \begin{proof}[Proof of Proposition \ref{prop:energy-est} ]

     First, to each $\vec{\gamma} \in \Gamma$, we associate an unweighted operator $I_{\vec{\gamma}}$ defined by $I_{\vec{\gamma}} g(x) := \sum_{(x, y)\in \vec{\gamma}}\nabla g(x,y)$, for all $g:\mathbb{Z}^d\to \mathbb{R}$, and with $\nabla g(x,y):=g(y)-g(x)$.
     Let $u, \eta$ be as assumed, and denote by $\dot{\gamma}$ the set of vertices in the cycle $\vec{\gamma}$. Then, 
     \begin{align}
      \label{eqn:energy_est_start}   
        0 \leq \inp{\eta^2 u}{L^\omega u} = \sum_{\vec{\gamma}\in \Gamma}\omega(\vec{\gamma})\inp{\eta^2 u}{I_{\vec{\gamma}}u}_{\dot{\gamma}}.
          \end{align}
     Our next step is to bound $\inp{\eta^2u}{I_{\vec{\gamma}}u}_{\dot{\gamma} }$ from above by the form of 
\begin{align*}
    \text{const}_1\cdot \mathcal{E}_{\vec{\gamma}}(\eta u) - \text{const}_2 \cdot F(\norm{\nabla \eta}^2_{\ell_\infty(\vec{E}_d)}\cdot u^2\restrict{\dot{\gamma} \cap B}).
\end{align*}
for some deterministic function $F$.

Let $\dot{\gamma}_B = \dot{\gamma}\cap B$, and $\vec{\gamma}_B$ denote the subset of oriented edges of $\vec{\gamma}$ with both endpoints in $B$.

Observe that 
\begin{align}
   &  \langle \, \eta^2u, I_{\dot{\gamma}} u\,\rangle_{\vec{\gamma}} \nonumber \\
    & \; = \sum_{(x,y)\in \vec{\gamma}} (\eta^2u) (x)  \nabla u (x,y) \nonumber \\
    & \;  = \sum_{(x,y)\in \vec{\gamma}}  \big(-(\eta^2u^2) (x)  + (\eta u)(x) (\eta u)(y) - (\eta u)(x) (\eta u)(y) + (\eta^2u)(x)u(y) \big) \nonumber \\
    & \;  = \underbrace{\inp{\eta u}{I_{\vec{\gamma}}(\eta u)}_{\dot{\gamma}_B}}_{:=A_1}  + \sum_{(x,y)\in \vec{\gamma}_B} (\eta u)(x)u(y)\nabla\eta(y,x)\nonumber  \\
    & \; = A_1 +  \frac{1}{\abs{\dot{\gamma}_B}} \sum_{\substack{(x,y)\in \vec{\gamma}_B \nonumber \\ z\in \dot{\gamma}_B}} \big( \underbrace{\nabla (\eta u)(z,x)u(y)}_{:=A_{21}(x,y,z)}   + (\eta u)(z) u(y) \big) \nabla \eta(y,x)\nonumber  \\
    & \; \stackrel{(*)}{=}  A_1 +  \frac{1}{\abs{\dot{\gamma}_B}}\sum_{\substack{(x,y)\in \vec{\gamma}_B \\ z\in \dot{\gamma}_B}} \big(A_{21}(x,y,z)  + \big(\eta(z)u(y)- (\eta u)(z)\big) u(z)\big) \nabla \eta(y,x) \nonumber \\
    & \;=  A_1 + \frac{1}{\abs{\dot{\gamma}_B}}\sum_{\substack{(x,y)\in \vec{\gamma}_B \\ z\in\dot{\gamma}_B}} \big(A_{21}(x,y,z)   + \underbrace{\nabla (\eta u)(z,y)  u(z)}_{:=A_{221}(y,z)}+ \underbrace{\nabla \eta (y, z) u(y) u(z)}_{:=A_{222}(y,z)} \big) \nabla \eta(y,x), \label{eqn:master_energy_est}
\end{align}
where $(*)$ is due to  $\sum_{(x,y)\in \vec{\gamma}_B} \nabla \eta (y,x) =0$. 
Notice that $$A_1= - \frac{1}{2}\inp{\nabla(\eta u)}{\nabla(\eta u)}_{\vec{\gamma}}= -\mathcal{E}_{\vec{\gamma}}(\eta u).$$ Moreover by Young's inequality, 
\begin{align*}
  \abs{A_{21}(x,y,z)} &\leq  \frac{1}{2}\sum_{(x',y')\in \vec{\gamma}_B}\abs{ \nabla (\eta u)(x', y')} \abs{u(y)}  \leq  \frac{\varepsilon}{2}\mathcal{E}_{\vec{\gamma}} (\eta u) 
    + \frac{1}{4\varepsilon}\abs{\vec{\gamma}_B} u^2(y), \\
    \abs{A_{221}(y,z)} & \leq \frac{1}{2}\sum_{(x', y')\in \vec{\gamma}_B }\abs{\nabla (\eta u)(x', y')}  \abs{u(z)}  \leq \frac{\varepsilon}{2}\mathcal{E}_{\vec{\gamma}}(\eta u)  + \frac{1}{4\varepsilon }\abs{\vec{\gamma}_B}  u^2(z).
    \end{align*}
Furthermore, 
\begin{align*}
 \abs{A_{222}(y,z)} & \leq \frac{1}{2} \abs{\vec{\gamma}_B} \, \norm{\nabla \eta}_{\ell_\infty(\vec{E}_d)} \abs{u(y)}\abs{u(z)}.
\end{align*}
Substituting  above estimates into \eqref{eqn:master_energy_est} with the choice $\varepsilon = (2\abs{\vec{\gamma}_B}\norm{\nabla \eta}_{\ell_\infty(\vec{E}_d)})^{-1}$, we have 
\begin{align}
\label{eqn:energy_est_unweighted}
    \inp{\eta^2u}{I_{\vec{\gamma}} u}_{\vec{\gamma}} \leq -\frac{1}{2}\mathcal{E}_{\vec{\gamma}}(\eta u) + (\abs{\vec{\gamma}_B}^2+\frac{1}{2}\abs{\vec{\gamma}_B})\norm{\nabla \eta}_{\ell_{\infty(\vec{E}_d)}}^2 \sum_{x\in \dot{\gamma}_B}u^2(x).
\end{align}
Combining with \eqref{eqn:energy_est_start}, and by Hölder's inequality, we obtain
\begin{align*}
    \mathcal{E}^\omega_{\Gamma}(\eta u ) & \leq  \norm{\nabla \eta}_{\ell_{\infty(\vec{E}_d)}}^2 \sum_{x\in B} u^2(x) \sum_{\vec{\gamma} \in \Gamma} \big(2\abs{\vec{\gamma}}^2 +\abs{\vec{\gamma}}\big)\omega(\vec{\gamma})\mathbbm{1}_{x\in \dot{\gamma}} \\
    & \leq  \norm{\nabla \eta}_{\ell_{\infty(\vec{E}_d)}}^2 \abs{B} \sum_{x\in B} u^2(x) \big(2\mu^{(2), \omega}(x)+ \mu^{(1), \omega}(x)\big) \\
    & \leq \frac{5}{2} \norm{\nabla \eta}_{\ell_{\infty(\vec{E}_d)}}^2 \abs{B} \norm{\mu^{(2), \omega}}_{p, B}\norm{u^2}_{p_*, B}.
\end{align*}
    By moving $\abs{B}$ to the LHS, we conclude \eqref{eqn:energy-est}.
     \end{proof}

     \begin{lemma}[De Giorgi's iteration method]
     \label{prop:iteration}
         Let $f:[0, \infty)\times [0, \infty) \to [0, \infty]$ be a non-negative function. Suppose it holds, that for all $l>k \geq 0$ and $ \sigma>\sigma' \geq 0$
         \begin{align*}
             f(l, \sigma')\leq \frac{C_{\mathrm{DG}}}{(\sigma - \sigma')^\alpha(l-k)^\beta} f(k, \sigma)^\gamma, 
         \end{align*}
         for some constants $C_{\mathrm{DG}}, \alpha, \beta >0$ and $\gamma >1$. Then, there exists a constant $K\equiv K(C, f, \sigma', \alpha, \beta, \gamma)$ such that 
         \begin{align*}
             f(l, \sigma')=0, \quad \forall l\geq K. 
         \end{align*}
         In particular, we may choose
         \begin{align}
        \label{eqn:max}
             K = f(0, \sigma)^{\frac{\gamma-1}{\beta}}C_{\mathrm{DG}}^{\frac{1}{\beta}}2^{\frac{(\alpha+\beta)}{\gamma-1}+1}(\sigma-\sigma')^{-\frac{\alpha}{\beta}}, 
         \end{align}
         
     \end{lemma}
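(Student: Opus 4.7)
The approach is a classical De Giorgi iteration along geometrically nested dyadic sequences in both slots of $f$. The plan is to fix a candidate threshold $K$ (to be determined), set
\begin{equation*}
  l_j \;\ldef\; K\bigl(1 - 2^{-j}\bigr),
  \qquad
  \sigma_j \;\ldef\; \sigma' + (\sigma - \sigma')\, 2^{-j},
  \qquad j \in \mathbb{N}_0,
\end{equation*}
so that $l_0 = 0$, $\sigma_0 = \sigma$, $l_j \nearrow K$ and $\sigma_j \searrow \sigma'$, with step sizes $l_{j+1} - l_j = K\, 2^{-(j+1)}$ and $\sigma_j - \sigma_{j+1} = (\sigma - \sigma')\, 2^{-(j+1)}$. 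Writing $a_j \ldef f(l_j, \sigma_j)$ and invoking the hypothesis with the parameters $(l, k, \sigma, \sigma') \leftrightarrow (l_{j+1}, l_j, \sigma_j, \sigma_{j+1})$ delivers the one-step recursion
\begin{equation*}
  a_{j+1}
  \;\leq\;
  \frac{C_{\mathrm{DG}}\, 2^{(\alpha+\beta)(j+1)}}{K^\beta\, (\sigma - \sigma')^\alpha}\, a_j^{\gamma}.
\end{equation*}

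Next, I would feed this recursion the self-similar ansatz $a_j \leq a_0\, \mu^j$ with $\mu \in (0, 1)$. The crucial observation is that after substitution the $j$-dependent factors on the right-hand side consolidate into $(2^{\alpha+\beta}\, \mu^{\gamma-1})^j$; this matches the required $\mu^{j+1}$ pattern exactly when $2^{\alpha+\beta}\, \mu^{\gamma-1} = 1$, fixing the characteristic ratio $\mu = 2^{-(\alpha+\beta)/(\gamma-1)} < 1$ -- here the hypothesis $\gamma > 1$ is essential. With this choice of $\mu$, the inductive step collapses to a single $j$-independent inequality on $K$ of the form $K^\beta \geq C_{\mathrm{DG}}\, a_0^{\gamma-1}\, (\sigma-\sigma')^{-\alpha} \cdot 2^{\alpha+\beta}/\mu$, which is satisfied once $K$ meets the threshold in \eqref{eqn:max} (after absorbing the $\mu$-factor into the stated exponent on $2$). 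An induction in $j$ then propagates the bound $a_j \leq a_0\, \mu^j$, and consequently $a_j \xrightarrow{j \to \infty} 0$.

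Finally, to promote $\lim_j a_j = 0$ to the claimed $f(l, \sigma') = 0$ for every $l \geq K$, I would invoke the monotonicity of $f$ implicit in the intended applications: $f$ should be non-increasing in its first slot and non-decreasing in its second, as is automatically the case whenever $f(l, \sigma)$ is a tail functional such as $\Norm{(u - l)_+}{p, B(x, \sigma n)}$. Under this monotonicity one has $f(l, \sigma') \leq f(l_j, \sigma_j) = a_j \to 0$ for each $l \geq K$ and every $j$. The main obstacle here is purely computational -- tracking the numerical constant through the self-similar recursion and verifying that the dyadic bookkeeping absorbs into the threshold \eqref{eqn:max} -- while the conceptual core is the algebraic identification of the ratio $\mu$ that produces exact cancellation of the geometric factors $2^{(\alpha+\beta)j}$ coming from the dyadic decomposition.
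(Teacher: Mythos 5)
Your overall approach---dyadic nested sequences $l_j = K(1-2^{-j})$, $\sigma_j = \sigma' + (\sigma-\sigma')2^{-j}$, the one-step recursion, and picking $\mu$ so that $2^{\alpha+\beta}\mu^{\gamma-1}=1$---is the standard De~Giorgi iteration and matches what the cited reference \cite[Theorem~4.1]{HL11} supplies; the paper's own proof is just that citation. The recursion algebra and the identification of the characteristic ratio are correct, and the constant mismatch you note (your exponent $\frac{(\alpha+\beta)\gamma}{\beta(\gamma-1)}$ on $2$ versus the paper's $\frac{\alpha+\beta}{\gamma-1}+1$) is harmless since the conclusion only asserts existence of \emph{some} $K$ and any larger threshold inherits the bound, though ``absorbing the $\mu$-factor'' does not actually reconcile the two formulas.

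The genuine gap is your final step. You pass from $a_j \to 0$ to $f(l,\sigma')=0$ by invoking monotonicity of $f$ in both slots. That monotonicity is not among the lemma's hypotheses, and more importantly it \emph{fails} in the paper's application: the norm used there, $\Norm{\cdot}{p_*, B(x,\sigma n)}$, is a \emph{space-averaged} (normalized) norm. For nested balls $B'\subset B$ one only has $\Norm{g}{p,B'} \le (|B|/|B'|)^{1/p}\,\Norm{g}{p,B}$; shrinking the ball can \emph{increase} the averaged norm, so $f(m,\sigma)=\Norm{(|u|-m)_+^2}{p_*,B(x,\sigma n)}$ is not non-decreasing in $\sigma$. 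The fix is that monotonicity is not needed at all. For any $l\ge K$ and any $j$, apply the hypothesis once more with the data $(l, l_j, \sigma_j, \sigma')$: since $l - l_j \ge K\,2^{-j}$ and $\sigma_j - \sigma' = (\sigma-\sigma')\,2^{-j}$,
\begin{align*}
  f(l,\sigma')
  \;\le\;
  \frac{C_{\mathrm{DG}}\, 2^{(\alpha+\beta)j}}{(\sigma-\sigma')^\alpha K^\beta}\, a_j^\gamma
  \;\le\;
  \frac{C_{\mathrm{DG}}\, a_0^\gamma}{(\sigma-\sigma')^\alpha K^\beta}\,
  \bigl(2^{\alpha+\beta}\mu^\gamma\bigr)^j
  \;=\;
  \frac{C_{\mathrm{DG}}\, a_0^\gamma}{(\sigma-\sigma')^\alpha K^\beta}\, \mu^j,
\end{align*}
using $2^{\alpha+\beta}\mu^{\gamma-1}=1$ in the last equality. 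Letting $j\to\infty$ gives $f(l,\sigma')=0$ directly, with no appeal to structure of $f$ beyond the hypothesis.
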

     \begin{proof}
          cf. the iteration introduced for the proof of \cite[Theorem 4.1]{HL11}.
     \end{proof}

     \begin{proof}[Proof of Proposition \ref{prop:maximum-inequality}]
     Our strategy is to show firstly 
     \begin{align}
\label{eqn:DeGiorgi-criteria}
    \norm{(\abs{u}-l)_+^2}_{p_*, B(x, \sigma'n)} \leq C_1 \frac{ 
    \norm{\mu^{(2), \omega}}_{p, B(x, \sigma n)}\norm{\nu^\omega}_{q, B(x, \sigma n)}}{(\sigma - \sigma')^2 (l-k)^{2\gamma -2}}\norm{(\abs{u}-k)_+^2}^\gamma_{p_*, B(x, \sigma n)}, 
\end{align}
for some constant $C_1\equiv C_1(c_\mathrm{reg}, C_\mathrm{reg}, 
d, p, C_\mathrm{WS}, C_{\mathrm{En}})$. Then we apply Proposition \ref{prop:iteration}, and use the choice \eqref{eqn:max} to conclude \eqref{eqn:max-ineq}.

We start by defining a test function $\eta$ 
satisfying the following assumptions: 
\begin{align*}
    & \supp \eta \subset B(x, \sigma n), \quad \eta \equiv 1 \text{ on } B(x, \sigma'n), \quad \eta \equiv 0 \text{ on } \partial B(x, \sigma n).\\
\end{align*}
By interpolating $\eta$ 
linearly on $B(x, \sigma n)\setminus B(\sigma 'n)$,  
it follows
\begin{align}
\label{ineq:test-function-bounds}
    \norm{\nabla \eta}_{\ell^\infty(E)} \leq \frac{1}{(\sigma-\sigma')n}. 
\end{align}
Furthermore, we set $\delta = \rho/p_*$. By $1/p+1/q<2/d$ and \eqref{def:rho}, $\delta >1$. 

Following from Proposition \ref{prop:graph}-(i), $\frac{1}{2}\leq \sigma' \leq \sigma  \leq 1$, and Hölder's inequality, 
\begin{align*}
     \norm{(\abs{u}-l)_+^2}_{p_*, B(x, \sigma'n)} & \leq C_2  \norm{(\eta (\abs{u}-l)_+)^2}_{p_*, B(x, \sigma n)} \\
     & \leq C_2\norm{(\eta (\abs{u}-l)_+)^2)}_{\rho, B(x, \sigma n)} \norm{\mathbbm{1}_{\{\abs{u}>l\}}}^{\frac{1}{\delta*}}_{p_*, B(x, \sigma n)}
\end{align*}
holds for large $n$, with $C_2\equiv C_2(c_\mathrm{reg}, C_\mathrm{reg}, p_*, d)$ 
and $\delta_*$ being the Hölder conjugate of $\delta$.
Moreover, combining the  weighted Sobolev inequality(Proposition \ref{prop:weighted-graph}-(i)) and the energy estimate \eqref{eqn:energy-est} (cf. Remark \ref{rm:overflow}), we derive 
\begin{align*}
     &  \norm{(\eta (\abs{u}-l)_+)^2}_{\rho, B(x, \sigma n)} \\
    &\qquad \leq 2C_\mathrm{WS}\, n^2\norm{\nu^\omega}_{q, B(x, \sigma n)}\frac{\mathcal{E}^\omega (\eta (\abs{u}-l)_+)}{\abs{B(x, \sigma n)}} \\
    & \qquad\leq 2C_\mathrm{WS} C_{\mathrm{En}}n^2\norm{\nu^\omega}_{q, B(x, \sigma n)} \norm{\mu^{(2), \omega}}_{p, B(x, \sigma n)} \norm{\nabla \eta}^2_{\ell^\infty(E)}   \norm{(\abs{u}-l)_+^2}_{p_*,  B(x, \sigma n)}  \\
        & \qquad\stackrel{\eqref{ineq:test-function-bounds}}{\leq} C_3 \frac{\norm{\mu^{(2), \omega}}_{p, B(x, \sigma n)}\norm{\nu^\omega}_{q, B(x, \sigma n)}}{(\sigma-\sigma')^2} \norm{(\abs{u}-l)_+^2}_{p_*, B(x, \sigma n)},
\end{align*}
where $C_3=2C_{\mathrm{WS}}C_{\mathrm{En}}.$
In addition, Markov's inequality implies, for $0\leq k\leq l$,
\begin{align*}
    \norm{\mathbbm{1}_{\{\abs{u}>l\}}}_{p_*, B(x, \sigma n)} = \norm{\mathbbm{1}_{\{(\abs{u}-k)_+>l-k\}}}_{p_*, B(x, \sigma n)}\leq \frac{\norm{(\abs{u}-k)^2_+}_{p_*, B(x, \sigma n)}}{(l-k)^2}.
\end{align*}
Together with previous estimates, we derive \eqref{eqn:DeGiorgi-criteria} with $\gamma = 1+\frac{1}{\delta_*}, $ and $C_1=C_2C_3.$

Next, we observe that the criteria of Proposition \ref{prop:iteration} is satisfied, with 
\begin{align*}
    & f(m, \sigma)= \norm{(\abs{u}-m)^2_+}_{p_*, B(x, \sigma n)}, \quad \forall m, \sigma \geq 0; \\
    &  \alpha =2, \quad \beta = \frac{2}{\delta_*}, \quad \gamma = 1+\frac{1}{\delta_*};\\
    & C_{\mathrm{DG}}= C_1 \norm{\mu^{(2), \omega}}_{p, B(x, \sigma n)}\norm{\nu^\omega}_{q, B(x, \sigma n)}.
\end{align*}
By evaluating \eqref{eqn:max}, the maximal inequality \eqref{eqn:max-ineq} follows by choosing
\begin{align*}
    \kappa = \frac{\delta^*}{2}, \qquad 
    C_{\mathrm{Max}}= 2^{4\kappa+3}C_1^\kappa. 
\end{align*}
     \end{proof}

\bibliographystyle{abbrv}
\bibliography{literature}

\end{document}